            \newcommand{\marginalnote}[1]{}
\swapnumbers \theoremstyle{plain}
\newtheorem{Thm}{Theorem}[section]
\newtheorem{Prop}[Thm]{Proposition}
\newtheorem{Lem}[Thm]{Lemma}
\newtheorem{Cor}[Thm]{Corollary}
\newtheorem*{Thm*}{Theorem}
\theoremstyle{remark}
\theoremstyle{definition}
\newtheorem{Rem}[Thm]{Remark}
\newtheorem{Ex}[Thm]{Example}
\newtheorem{Def}[Thm]{Definition}
\newtheorem{Rev}[Thm]{Review}
\numberwithin{figure}{Thm}
\newcommand{\lconj}[2]{{}^{#1}\mkern-1mu{#2}}
\newcommand{\rconj}[2]{{}^{#1}\mkern-1mu{#2}}
\newcommand{\gen}[1]{\left\langle#1\right\rangle}
\newcommand{\gp}[2]{\gen{#1\mid #2}}
\newcommand{\prs}[2]{\gen{#1\parallel #2}}
\newcommand{\ol}{\overline}
\newcommand{\Z}{\mathbb{Z}}
\newcommand{\C}{\mathbb{C}}
\def\coloneqq{\mathrel{\mathop\mathchar"303A}\mkern-1.2mu=}
\DeclareMathOperator{\Tor}{Tor}
\DeclareMathOperator{\uE}{\underline{E}}
\DeclareMathOperator{\mor}{mor}
\DeclareMathOperator{\Hop}{H}
\DeclareMathOperator{\Mod}{Mod}
\begin{document}

\title{On the classifying space for proper actions of groups with cyclic torsion}
\author{Yago Antol\'{i}n and Ram\'{o}n Flores}

\maketitle

\begin{abstract}

In this paper we introduce a common framework for describing the topological part of the Baum-Connes conjecture for a wide class of groups. We compute the Bredon homology  for groups with aspherical presentation, one-relator quotients of products of locally indicable groups, extensions of $\mathbb{Z}^n$ by cyclic groups, and fuchsian groups. We take advantage of the torsion structure of these groups to use appropriate models of the universal space for proper actions which allow us, in turn, to extend some technology defined by Mislin in the case of one-relator groups.

\medskip

{\footnotesize
\noindent \emph{2010 Mathematics Subject Classification.} Primary: 55N91;
Secondary: 20F05, 20J05.

\noindent \emph{Key words.} Bredon homology, classifying space for proper actions, aspherical presentations, Hempel groups, Baum-Connes conjecture}

\end{abstract}

\section{Introduction}

In \cite{MislinValette},  Mislin computed the Bredon homology of one-relator groups with coefficients in the complex representation ring. These homology groups were defined in the sixties by  Bredon in the context of equivariant Homotopy Theory. Since the statement by Baum-Connes of their famous conjecture (see \cite{BaumConnes} for a thorough account and section 5 here for a quick review), there has been a growing interest in the computation of the Bredon homology groups, as they give, via a spectral sequence, a very close approximation to the topological part of the conjecture. Moreover, they are reasonably accessible from the point of view of the computations.

Let $G$ be a discrete group. To deal with the topological part of Baum-Connes conjecture, it is necessary to recall some basics of the theory of proper $G$-actions.  A \emph{model} \emph{for the classifying space for proper G-actions} $\uE G$ is a $G$-CW-complex $X$ with the property that,  for each subgroup $H$ of $G,$ the subcomplex of fixed points is contractible if $H$ is finite, and empty if $H$ is infinite. The latter condition means precisely that all cell stabilizers are finite, and, in this case, we say that the $G$-action is \emph{proper}. The model $X$ for $\uE G$ is important in our context because it is the target of the topological side of Baum-Connes conjecture. We also denote by $(X)^{sing}$ the \emph{singular part} of $X,$ that is, the subcomplex consisting of all points in $X$ fixed by some non-trivial element of $G,$ and we say that $(X)^{sing}$ is a model for  $(\uE G)^{sing}.$ It is worth to note that both models for $\uE G$ and $(\uE G)^{sing}$ are well-defined up to $G$-homotopy equivalence.

In this paper we extend Mislin's result \cite[Corollary 3.23]{MislinValette} to a wider class of groups. The key observation here is that the computation of the Bredon homology of one-relator groups does not use in full potential the shape of the concrete relation, but it relies only in two facts: the existence (up to conjugation) of a unique maximal finite subgroup, and the construction of a model for $\uE G$ whose singular part is zero-dimensional.

We consider here the class $\mathcal{G}_{cct}$ of groups for which there is a finite family of cyclic subgroups such that every non-trivial torsion element of the group belongs to exactly one member of the family up to unique conjugation. If we consider the classical model $X$ for $\uE G$ as a bar construction for a certain $G\in \mathcal{G}_{cct}$, it is easy to see (see Proposition \ref{bar-construction}) that for every non-trivial finite subgroup $H<G$ the fixed-point set $(X)^H$ is just a vertex of the model. Hence the singular part of $X$ is zero-dimensional, and we have all the needed assumptions to compute the Bredon homology groups (Proposition \ref{T:clabredonhom}) and hence the Kasparov $KK$-groups (Proposition \ref{Kasparov}).

Aside from the computations, the other main achievement of our article is the identification of well-known families of groups which belong to the class $\mathcal{G}_{cct}$: groups with an aspherical presentation, one-relator products of locally indicable groups, some extensions of $\mathbb{Z}^n$ for cyclic groups, and some fuchsian groups. For the first two families, moreover, we describe some particular models of $\uE G$ which turn sharper our homology computations. For the particular class of Hempel groups (see Definition \ref{D:hempel}), we show that they have Cohen-Lyndon aspherical presentations, so we use Magnus induction and hierarchical decompositions to prove that Baum-Connes hold; then, our methods are also valid to compute analytical $K$-groups $K_i^{top}(C^*_r(G))$ in this case. Note that the validity of the conjecture is not known for more general classes of groups of cohomological dimension two.

The paper is structured as follows: in section 2 we formally introduce the class $\mathcal{G}_{cct}$, as well as some background that will be needed in the rest of the paper; in section 3 we present some families of groups in the class, including some particular models of the classifying space for proper actions and interesting relationships with surface groups and other one-relator groups; section 4 is devoted to Bredon homology, which we describe in a little survey before undertaking our computations, and we finish in section 5 with the computation of the topological part of the Baum-Connes conjecture for these groups.

\textbf{Acknowledgments}.
Part of this paper is based on the Ph.D. thesis of the first author at the Universitat Auton\`{o}ma de Barcelona. The first author is grateful to Warren Dicks for his help during that period.

We are grateful to Ruben Sanchez-Garc\'{i}a for many useful comments and observations, and also to  Brita E. A. Nucinkis, Ian Leary, Giovanni Gandini and David Singerman for helpful conversations.

The authors were supported by MCI (Spain) through project MTM2008-01550 and EPSRC through project EP/H032428/1 (first author) and project MTM2010-20692 (second author).

\section{The class of groups $\mathcal{G}_{cct}$}

In this article we will deal with groups that have, up to conjugation, a finite family of maximal malnormal cyclic subgroups. Precisely we deal with groups satisfying the following condition:

\begin{enumerate}
\item[(C)] There is a finite family of non-trivial  finite cyclic subgroups $\{G_\lambda\}_{\lambda\in \Lambda}$ such that for each non-trivial torsion subgroup $H$ of $G,$ there exists a unique $\lambda\in \Lambda$ and a unique coset $gG_\lambda \in G/G_{\lambda}$ such that $H\leqslant g G_\lambda g^{-1}.$\end{enumerate}

In particular, the groups $G_{\lambda}$ are maximal \emph{malnormal} in $G$. Recall that a subgroup $H$ of a group $G$ is {\it malnormal} if $H\cap gHg^{-1}=\{1\}$ for all $g\in G-H$.
The class of  groups $G$ satisfying the condition (C) will be denoted by $\mathcal{G}_{cct}$.

Observe that any finite cyclic group and any torsion-free group is in $\mathcal{G}_{cct}.$
Moreover, this  class is also closed by free products, so for example, the infinite dihedral group is in  $\mathcal{G}_{cct}.$
In the following section we will describe many interesting examples.

Our main objective is to describe  the Bredon homology for a group $G$ in $\mathcal{G}_{cct}$. Our approach to this computation will be through the classifying space for proper $G$-actions, so we recall here a classical model which turns out to be very useful for our purposes. More sophisticated and particular models for $\uE G$ will appear later in the article.

\begin{Prop}\label{bar-construction}
Let $G$ and $\{G_\lambda\}_{\lambda\in \Lambda}$ satisfying (C), then there exists a model $\mathcal{C}$ for $\uE G$ with $\dim (\mathcal{C}^{sing})=0.$
\end{Prop}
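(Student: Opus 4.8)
The plan is to produce $\mathcal{C}$ by coning off a free contractible $G$-CW-complex $EG$ along the conjugates of the $G_\lambda$, so that the only non-free cells are the apexes of these cones. First I would record the structural consequences of (C) that make the construction work. Since each $G_\lambda$ is malnormal and non-trivial, it is self-normalizing: if $gG_\lambda g^{-1}=G_\lambda$ with $g\notin G_\lambda$ then $G_\lambda=G_\lambda\cap gG_\lambda g^{-1}=\{1\}$, a contradiction, so $N_G(G_\lambda)=G_\lambda$. Moreover the uniqueness clause in (C) forces any two distinct conjugates among the family $\{gG_\lambda g^{-1}\}$ to intersect trivially: a common non-trivial element $x$ would give a non-trivial cyclic subgroup $\gen{x}$ lying in two different maximal cosets, against (C). Thus the conjugates $gG_\lambda g^{-1}$ are exactly the maximal non-trivial finite subgroups of $G$, they pairwise intersect trivially, and each non-trivial finite $H\leq G$ is contained in a unique one.

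With this in hand I would build $\mathcal{C}$ as the $G$-pushout (the Lück--Weiermann homotopy pushout for the pair of families $\{1\}\subset \mathcal{FIN}$) governed by
\[
\coprod_{\lambda}G\times_{G_\lambda}EG_\lambda \;\longrightarrow\; EG,\qquad
\coprod_{\lambda}G\times_{G_\lambda}EG_\lambda \;\longrightarrow\; \coprod_{\lambda}G\times_{G_\lambda}\operatorname{cone}(EG_\lambda),
\]
where $EG_\lambda$ is the restriction of $EG$ to $G_\lambda$ (a free contractible $G_\lambda$-space) and the left-hand map is the induced inclusion into $EG$. Concretely, $\mathcal{C}$ is obtained from $EG$ by attaching, for each coset $gG_\lambda$, the cone on the free contractible $gG_\lambda g^{-1}$-subspace; the apex of that cone is a single $0$-cell whose stabilizer is exactly $gG_\lambda g^{-1}$. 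Because $N_G(G_\lambda)=G_\lambda$, the apexes of the cones in the class of $G_\lambda$ form the orbit $G/G_\lambda$, so altogether they constitute the discrete $G$-set $\coprod_\lambda G/G_\lambda$.

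I would then read off the singular part. Away from the apexes the action is free (both $EG$ and the open mapping-cylinder parts carry free $G$-actions), so every point with non-trivial stabilizer is an apex. Conversely each non-trivial finite $H$ fixes precisely the unique apex indexed by the coset $gG_\lambda$ with $H\leq gG_\lambda g^{-1}$, and by the trivial-intersection property it fixes no other apex; hence $\mathcal{C}^H$ is a single vertex, in particular contractible. Consequently $\mathcal{C}^{sing}=\coprod_\lambda G/G_\lambda$ is zero-dimensional, which is the assertion. For infinite $H$ no finite $gG_\lambda g^{-1}$ contains it, so $\mathcal{C}^H=\emptyset$, and all cell stabilizers are finite, so the $G$-action is proper.

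The step I expect to be the main obstacle is checking that $\mathcal{C}=\mathcal{C}^{\{1\}}$ is genuinely contractible: the space $\coprod_\lambda G\times_{G_\lambda}EG_\lambda$ glued into $EG$ is a disjoint union of contractible pieces but is far from connected, so contractibility of the pushout is not formal and must be argued, e.g.\ via the Mayer--Vietoris/van Kampen description of the homotopy pushout (each cone kills the corresponding copy of $EG_\lambda$ up to homotopy, while $EG$ is already contractible). This is exactly what the Lück--Weiermann pushout guarantees once the inputs $E_{\{1\}}(G_\lambda)\simeq EG_\lambda$ and $E_{\mathcal{FIN}}(G_\lambda)\simeq\operatorname{cone}(EG_\lambda)\simeq\mathrm{pt}$ are identified; verifying these identifications and the properness of the resulting $G$-CW-structure is the remaining routine but essential check.
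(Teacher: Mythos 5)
Your construction is correct, and in fact it produces the \emph{same} $G$-space as the paper, only packaged differently: the paper forms the join $X*E$ of the discrete $G$-set $X=\{gG_\lambda:\lambda\in\Lambda,\ g\in G\}$ with a model $E$ for $EG$, and unwinding the definition of the join of a discrete set with a space gives exactly your union of cones on $E$ glued along their common base, one apex per coset $gG_\lambda$ (equivalently, the double mapping cylinder of $X\leftarrow X\times E\rightarrow E$, i.e.\ your homotopy pushout; note only that the left-hand map $\coprod_\lambda G\times_{G_\lambda}\mathrm{res}\,EG\to EG$ is the action map, not an inclusion). The payoff of the paper's packaging is precisely at the step you single out as the main obstacle: contractibility of $X*E$ is immediate from the standard fact that a join with a nonempty contractible CW-complex is contractible, so no appeal to the L\"uck--Weiermann machinery is needed; the fixed-point analysis is then the same as yours ($H$ nontrivial finite acts freely on $X\times E\times(0,1]$ and fixes the single apex given by condition (C), infinite $H$ fixes nothing). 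Your preliminary observations (self-normalization of $G_\lambda$, trivial pairwise intersection of distinct conjugates) are correct consequences of (C) but are not actually needed: the stabilizer of the apex indexed by $gG_\lambda$ is $gG_\lambda g^{-1}$ by construction of the orbit $G/G_\lambda$, and uniqueness of the fixed apex is directly the uniqueness clause in (C). The paper's remark immediately after the proposition already records that $\mathcal{G}_{cct}$ sits inside L\"uck's class of groups with appropriate maximal finite subgroups and that his pushout model applies, so your route is legitimate; it simply outsources to that machinery a verification that the join description renders one-line.
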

\begin{proof}
We adapt the proof of \cite[Proposition 8]{PinedaLeary} to our context.
Let $X$ be the left $G$-set $\{g G_\lambda :\lambda \in \Lambda, g\in G\}.$  A subgroup $H$ of $G$ fixes $g G_\lambda\in X$ if and only if $g^{-1}Hg\subseteq G_\lambda$, so $H$ is finite and, by condition (C), $H$ fixes exactly one element of $X.$

Let $E$  a model for the universal space $EG$ (for example \cite[Example 1B.7]{Hatcher}). Recall that the join $X*E$ is the quotient space of $X\times E\times [0,1]$ under the identifications $(x,e,0)\sim(x,e',0)$ and $(x,e,1)\sim(x',e,1).$ The product $X\times E\times [0,1]$ is a $G$-set with $G$ acting trivially in the interval $[0,1]$ and induces an action on $X*E.$

Let $H$ be a subgroup of $G.$ If $H$ is infinite, then $H$ fixes no point of $X$ or $E.$ If $H$ is non-trivial finite, it acts freely on $X\times E\times (0,1],$ and fixes exactly one point of $X\times E\times \{0\}.$ Hence $(X*E)^{H}$ is contractible and 0-dimensional. If $H$ is trivial, then it fixes $X*E$, which is contractible since $E$ is contractible. Thus, $\mathcal{C}=X*E$ is a model for $\uE G,$ such that $\dim (\mathcal{C}^{sing})=0.$
\end{proof}

\begin{Rem}

Our class of groups $\mathcal{G}_{cct}$ is a subclass of the groups with appropriate maximal finite subgroups considered in \cite[4.11]{Luck04}. A particular model for $\uE G$ is also provided there.

\end{Rem}

A useful tool to show that a group $G$ and a family of subgroups $\{G_\lambda\}_{\lambda \in \Lambda}$ of $G$ satisfy the condition (C) is the following theorem:
\begin{Thm}[{\cite[Theorem 6]{HowieSchneebeli}}]\label{Thm:Howie} Let $G$ be a group and $\{G_\lambda\}_{\lambda \in \Lambda}$ a finite family of finite subgroups. If there exists an exact sequence of $\Z G$-modules
$$0\to \bigoplus_{\lambda\in \Lambda} \Z[G/G_\lambda]\oplus P\to P_{n-1}\to \dots\to P_0\to \Z $$
where $P, P_{n-1},\dots, P_0$ are $\Z G$-projective, then for every finite subgroup $H$ of $G$, there exists a unique $\lambda \in \Lambda$ and a unique $gG_\lambda\in G/G_\lambda$ such that $H$ is contained in $gG_\lambda g^{-1}.$
\end{Thm}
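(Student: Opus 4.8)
The plan is to test the hypothesis against one finite subgroup at a time by \emph{restriction}, and then to read off the counting statement from Tate cohomology. (The trivial subgroup must be excluded, since it sits inside every coset; this matches the fact that condition (C) concerns only non-trivial torsion.) So I fix a non-trivial finite subgroup $F\le G$. Since $\Z G$ is free as a left $\Z F$-module (a basis being a transversal of the right cosets $F\backslash G$), restriction carries free $\Z G$-modules to free $\Z F$-modules, hence projective to projective. Restricting the given resolution thus produces an exact sequence of $\Z F$-modules
\[
0\to \Big(\bigoplus_{\lambda\in\Lambda}\Z[G/G_\lambda]\Big)\Big|_F\ \oplus\ P|_F\to P_{n-1}|_F\to\cdots\to P_0|_F\to\Z\to 0
\]
whose terms, apart from the first, are all $\Z F$-projective. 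The Mackey decomposition $\Z[G/G_\lambda]|_F\cong\bigoplus_{FgG_\lambda\in F\backslash G/G_\lambda}\Z[F/(F\cap \lconj{g}{G_\lambda})]$, with $\lconj{g}{G_\lambda}:=gG_\lambda g^{-1}$, reformulates the theorem as follows: among all double-coset stabilizers $F\cap \lconj{g}{G_\lambda}$ exactly one should equal $F$. Indeed a summand is the trivial module $\Z$ precisely when $F\le \lconj{g}{G_\lambda}$, and such trivial summands correspond bijectively to the cosets $gG_\lambda$ containing $F$.

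The engine is the Tate cohomology $\widehat{H}^{\,*}(F,-)$ of the finite group $F$. Projective $\Z F$-modules are cohomologically trivial, so they vanish under $\widehat{H}^{\,*}(F,-)$; splicing the restricted resolution into short exact sequences and applying the connecting isomorphisms $n$ times gives the dimension shift $\widehat{H}^{\,j}(F,\Z)\cong\widehat{H}^{\,j+n}\big(F,\bigoplus_{\lambda}\Z[G/G_\lambda]\big)$, the summand $P|_F$ contributing nothing. Shapiro's lemma then yields the master identity
\[
\widehat{H}^{\,j}(F,\Z)\ \cong\ \bigoplus_{\lambda\in\Lambda}\ \bigoplus_{FgG_\lambda\in F\backslash G/G_\lambda}\widehat{H}^{\,j+n}\big(F\cap\lconj{g}{G_\lambda},\ \Z\big),\qquad j\in\Z .
\]
I would first extract the \textbf{base case} $F=C_p$ of prime order. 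Here every proper stabilizer is trivial and contributes zero, while $\widehat{H}^{\,*}(C_p,\Z)$ equals $\Z/p$ in even degrees and $0$ in odd degrees; comparing the two sides in an even degree forces the number of full summands to be exactly one (and, incidentally, $n$ to be even). Hence existence and uniqueness hold for subgroups of prime order.

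Uniqueness in general is then formal: if $F\le \lconj{g}{G_\lambda}$ and $F\le \lconj{g'}{G_{\lambda'}}$, pick a subgroup $C\le F$ of prime order; both cosets contain $C$, and by the base case there is a unique such coset, so $(\lambda,gG_\lambda)=(\lambda',g'G_{\lambda'})$. For \emph{existence} I would induct on $|F|$. The transparent case is when $F$ has a non-trivial proper normal subgroup $N$: by induction $N$ lies in a unique coset $g_0G_{\lambda_0}$, and since $N\trianglelefteq F$ every $F$-translate $hg_0G_{\lambda_0}$ still contains $\lconj{h}{N}=N$; uniqueness for $N$ collapses the $F$-orbit of $g_0G_{\lambda_0}$ to a point, giving $F\le \lconj{g_0}{G_{\lambda_0}}$. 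Taking $N$ to be a central subgroup of order $p$ settles every finite $p$-group at once.

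The main obstacle is exactly the residual existence case: a finite subgroup $F$ with no non-trivial proper normal subgroup, i.e.\ a non-abelian simple $F$, where the normal-subgroup trick is unavailable and the proper-stabilizer terms of the master identity no longer vanish. Disentangling those terms is the genuine content. The route I would pursue is to feed the already-settled cases back into the identity (every proper $K<F$ is subconjugate to a unique coset, so its Tate cohomology is itself governed by the identity applied to $K$) and to combine this with a transfer/corestriction argument at each prime $p\mid |F|$: the Sylow $p$-subgroups of $F$ are $p$-groups, hence subconjugate by the previous paragraph, and the task is to promote these Sylow-level fixed cosets to a single coset fixed by all of $F$, thereby excluding a fixed-point-free action of $F$ on $\bigsqcup_\lambda G/G_\lambda$. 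Realizing the resolution as the cellular chain complex of a contractible model carrying an $F$-action, this is a Smith-theoretic fixed-point statement; making it precise for a general non-abelian simple $F$ is where I expect the real difficulty to lie.
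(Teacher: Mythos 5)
First, a point of comparison: the paper itself gives no proof of this statement --- it is imported verbatim from Howie--Schneebeli \cite[Theorem 6]{HowieSchneebeli} --- so your attempt can only be judged on its own terms. The machinery you set up is the standard and correct one: restriction to a finite subgroup $F$ (projectives restrict to projectives), Mackey decomposition of $\Z[G/G_\lambda]|_F$, dimension shifting in Tate cohomology past the projective terms, and Shapiro's lemma, giving $\widehat{H}^{j}(F,\Z)\cong\bigoplus_{\lambda}\bigoplus_{FgG_\lambda}\widehat{H}^{j+n}(F\cap \lconj{g}{G_\lambda},\Z)$. (One small point to record: $F\backslash G/G_\lambda$ may be infinite, so you need that Tate cohomology of a finite group commutes with arbitrary direct sums; this holds because $\Z$ admits a complete resolution by finitely generated $\Z F$-modules.) Your prime-order base case is correct, your reduction of uniqueness to that case is correct, your normal-subgroup induction correctly yields existence for every finite subgroup with a non-trivial proper normal subgroup, and you are right that the trivial subgroup must be excluded from the statement as literally written.

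However, the proof is not complete: existence for a non-abelian simple finite subgroup $F$ is precisely the case you leave open, and it is a genuine gap rather than a routine verification. The degree-zero instance $\Z/|F|\cong\bigoplus_K\widehat{H}^{n}(K,\Z)$ only forces, for each prime $p$ dividing $|F|$, the existence of some stabilizer $K$ with $|K|_p=|F|_p$; for $F=A_5$ these could a priori be distinct proper subgroups such as $A_4$ (full $2$- and $3$-parts) and $D_{10}$ (full $5$-part), so exponent considerations alone do not produce a single full stabilizer. Moreover, the ``Smith-theoretic fixed-point statement'' you propose as the way out cannot work in the generality you need: by Floyd--Richardson there is a fixed-point-free simplicial action of $A_5$ on a contractible finite complex, so the mere existence of a contractible complex with the given isotropy data cannot force a fixed coset. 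Any correct argument must exploit the specific algebraic form of the $n$-th syzygy, namely that it is $\bigoplus_\lambda\Z[G/G_\lambda]\oplus P$ with $P$ projective, in an essential way. Until that case is closed, the theorem is not proved.
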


We remark that Alonso \cite{Alonso} find bounds for the dimension of the $\uE G$ of groups satisfying the hypothesis of the previous theorem.
\section{Examples of groups in the class $\mathcal{G}_{cct}$}

In this section we introduce some families of groups which will be proved to be in the class $\mathcal{G}_{cct}$. Let us fix some notation first.

\textbf{Notation}. We find useful to have different notations for a group given by a presentation and the presentation itself.  We use a double bar $\prs{X}{R}$ to distinguish a presentation from the
group being presented $\gp{X}{R}.$

Let $G$ be a group and $r\in G$. If $g$ lies in a unique maximal infinite cyclic subgroup $C$ of $G,$ we will denote by $\sqrt[G]{g}$ the unique generator of $C$ for which $g$ is a positive power. In this event, if $g$ is the $n$-th power of $\sqrt[G]{g},$ we denote $n$ by $\log_G (g).$

If $X$ is a subset of $G,$ we denote by $\lconj{G}{X}$ the image of $X$ under left-conjugation by $G,$ that is, $\lconj{G}{X}=\{gxg^{-1}: x\in X, g\in G\}.$ When $X=\{x\}$ we usually write $\lconj{G}{x}$ instead of $\lconj{G}{\{x\}}.$

\subsection{Groups with aspherical presentation}\label{ss:aspherical}

Let $F$ be a free group freely generated by a finite set $X=\{x_1,\dots, x_n\},$ let $R$ be a subset of $F$ and $G\coloneqq \gp{X}{R}.$ In a free group, each element lies in a unique maximal infinite cyclic subgroup of $F$. For $r\in R,$ let $G_r$ be the image of $\gen{\sqrt[F]{r}}$ in  $G$, a finite cyclic subgroup of $G.$

Recall that a CW-complex is {\it aspherical} if its universal covering is contractible. By the Hurewicz-Whitehead theorem a CW-complex is contractible if and only if it is acyclic and simply connected.

There exist several concepts of aspherical presentations, see \cite{CCH}. We will say that a presentation $\prs{X}{R}$ is {\it aspherical} if the abelianized of $\gen{\lconj{F}{R}}$ is isomorphic to $\oplus_{r\in R} \Z[G/G_r]$, and then  a group is aspherical if it admits an aspherical presentation. It is a famous conjecture of Eilenberg-Ganea that the torsion-free aspherical groups are precisely the groups of cohomological dimension two. We now review the topological significance of asphericity.

Recall that the Cayley graph of $G$ with respect to $X$ is a $G$-graph $\Gamma$ with vertex set $G$ and edge set $G\times X$; for an edge $e=(g,x)$ the initial vertex $\iota e$ is $g$ and the terminal vertex $\tau e$ is $g\cdot x.$
The augmented cellular chain complex of $\Gamma$ is
$$\Z [G\times X]\stackrel{\partial}{\to} \Z G\to \Z \to 0$$
where $\partial(g,x)=gx-g.$

It is well-known that the kernel of $\partial$ is isomorphic to $\gen{\lconj{F}{R}}_{\text{ab}}$
and the kernel map $\theta\colon\gen{\lconj{F}{R}}_{\text{ab}}\to  \Z[G\times X]$ is induced by the total free derivative $\frac{\partial}{\partial X}\colon F\to \Z[F\times X],$ which is defined by $f\mapsto (\frac{\partial f}{\partial x_1},\dots, \frac{\partial f}{\partial x_n}).$ The map $f\mapsto \frac{\partial{f}}{\partial x_i}$ is a derivation from $F\to \Z F,$ i.e. it satisfies the identity $\frac{\partial f_1f_2}{\partial x_i}=\frac{\partial f_1}{\partial x_i}+f_1\frac{\partial f_2}{\partial x_i}.$ Hence $\frac{\partial}{\partial x_i}$ is uniquely determined by its values on $X,$ and $\frac{\partial x_j}{\partial x_i}$ is equal to $0$ if $i\neq j$ and $1$ if $i=j.$ See \cite[Proposition 5.4]{Brown} or \cite[Corollary 9.4]{DicksDunwoody} for a proof.

Hence, there is the following exact sequence of $\Z G$-modules

\begin{equation}\label{eq:sec}
0\to \gen{{\lconj{F}{R}}}_{\text{ab}}\to \Z [G\times X]\to \Z G \to \Z \to 0.
\end{equation}

From now on we assume that $\prs{X}{R}$ is aspherical, that is $\gen{\lconj{F}{R}}_{ab}\cong\oplus_{r\in R} \Z [G/G_r].$ This is the case, for example, when $R$ consists of a single element by Lyndon's identity theorem (see, \cite{Lyndon}). Let $R_0\subset R$ be the set of  $r\in R$ for which $G_r\neq 1$. Then, by Theorem \ref{Thm:Howie}, $G$ and $\{G_r\}_{r\in R_0}$ satisfy the  condition (C).

With our assumption the sequence \eqref{eq:sec} becomes
\begin{equation}\label{eq:seq}
0\to \oplus_{r\in R} \Z[G/G_r]\stackrel{\theta}{\to} \oplus_{x\in X} \Z G\stackrel{\partial}{\to }\Z G\to \Z \to 1.
\end{equation}

We describe now a model for $\uE G$, which is built exactly the same way as the usual for one-relator groups. The construction is basically the same as in \cite[Review 7.4]{ADL} which deal with a special case when $|R|=2$, so we omit the details.

Recall that the {\it Cayley complex } of $\prs{X}{R},$ denoted $\overline{\mathcal C}= \overline{\mathcal C}\prs{X}{R},$ is a two-dimensional CW-complex with exactly one 0-cell denoted $[1]$, with set of 1-cells, denoted $[X],$ in bijective correspondence with $X$ by a map denoted $X\to [X],$ $x\mapsto [x],$ and with set of 2-cells, denoted $[R]$, in bijective correspondence with $R$ by a map denoted $R\to [R],$ $r\mapsto [r].$ The attaching maps are determined by the $1$-cells. Each $r$ is a word in $X^{\pm 1},$ and we take the closure of the $2$-cell $[r]$ to be a polygon whose (counter-wise) boundary has the corresponding labeling in the $1$-cells and their inverses, and this labeling gives the attaching map for $[r];$ the inverse of a 1-cell is the same 1-cell with the opposite orientation. The fundamental group of $\overline{\mathcal C},$ with base-point the unique 0-cell, has a natural identification with $G=\gp{X}{R}.$

Let $\mathcal{C}$ be the universal cover of $\overline{\mathcal{C}}.$ The $1$-skeleton of $\mathcal{C}$ is the Cayley graph of $G$ with respect to $X.$ It can be checked that $\mathcal{C}$ is simply connected and the augmented cellular complex of $\mathcal{C}$ is the $\Z G$-complex
$$0\to \Z [G\times [R]]{\to} \Z [G\times [X]]\to \Z[G\times\{[1]\}]\to \Z \to 0.$$

For $[r]\in [R],$ let $(g,[r])$ be the lift of the 2-cell of $[r]$ in $\mathcal{C}$ corresponding to the vertex $g\in G.$ Let $\mathcal{C'}$ denote the CW-complex that is obtained from $\mathcal C$ by identifying the $2$-cells $(g,[r])$ and $(g\sqrt[F]{r},[r]),$ for each $g\in G$ and $r\in R.$ We denote this identified cell by $(gG_r,[r]).$ It can be checked that $\mathcal{C'}$ is again simply connected, and the augmented chain complex of $\mathcal{C'}$ is the exact sequence \eqref{eq:seq}. Hence $\mathcal{C'}$ is contractible.

If $G_r=\{1\}$ for each $r\in R,$ that is $R=\sqrt[F]{R},$ then $\mathcal{C}=\mathcal{C'}$ is acyclic and hence contractible; in particular $\overline{\mathcal{C}}$ is aspherical and $\mathcal{C}$ is an $\uE G$.

If $\sqrt[F]{R}\neq R,$ $G$ does permute the open cells of $\mathcal C'$, but $\mathcal C'$ is not a $G$-CW-complex since, for some $r\in R,$ $\sqrt[F]{r}$ fixes the 2-cell that is in the equivalence class of $(1,[r])$ but does not fix it pointwise since it does not fix the 1-cells where this cell is attached.

Let $\mathcal{C}''$ denote the CW-complex obtained from $\mathcal C'$ by subdividing each 2-cell $(gG_r,[r])$ into $\log_F r$ 2-cells. If we think $(gG_r,[r])$ as a  polygon with $|r|$-sides, the subdivision corresponds to adding a new vertex in the center, dividing from this vertex into $\log_F r$ subpolygons, such that $\sqrt[F]{r}$ permutes them.

Notice that $\mathcal{C}''$ has one free $G$-orbit of $2$-cells for each $r\in R.$ Also, for each $r\in R$, we have also added a free $G$-orbit of 1-cells to $\mathcal{C}'.$ Finally for each $r\in R$ we have added a $G/G_r$-orbit of $0$-cells to $\mathcal{C'}.$  The CW-complex $\mathcal{C}''$ is then a contractible $G$-CW-complex whose augmented cellular chain complex  is exact and has the form
\begin{equation}\label{eq:seqaspherical}
0\to \Z G^{|R|}\to \Z G^{|R|+|X|}\to \Z G \oplus (\oplus_{r\in R} \Z[G/G_r])\to \Z \to 0.
\end{equation}

Now by (C) for each non-trivial torsion subgroup $H$ of $G$ there exists a unique $r\in R$ and $gG_r\in G/G_r$ such that  $H\leq \lconj{g}{G_r},$ and therefore $H$ fixes only the $0$-cell obtained on the subdivision of the $2$-cell $(gG_r, [r])$ of $\mathcal{C'}.$ Then $\mathcal{C}''$ is an $\uE G.$

\subsection{One-relator products of locally indicable groups}\label{ss:oreli}

Recall that a group is {\it indicable} if either is trivial or it has an infinite cyclic quotient. A group is {\it locally indicable} if every finitely generated subgroup is indicable. Notice that a locally indicable group is torsion-free.

Let $A,B$ be locally indicable groups having finite dimensional Eilenberg-MacLane spaces $\ol{\mathcal{C}}_A$ and $\ol{\mathcal{C}}_B$ respectively. Let $r\in A*B,$ such that $r$ is not conjugate to an element of $A$ nor of $B.$ It can be deduced, from Bass-Serre theory, that the centralizer $\mathbf{C}_{A*B}(r)$ is infinite cyclic, and hence we can define $\sqrt[A*B]{r}.$ Denote $G=(A*B)/\gen{ \lconj{A*B}{r} }$ and $G_r=\gen{\sqrt[A*B]{r}}/\gen{r}\leq G.$

Let now $\ol{\mathcal{C}}_{A*B}$ be the $CW$-complex obtained by attaching a $1$-cell $\mathbf{e}$ to the disjoint union of $\ol{\mathcal{C}}_A$ and $\ol{\mathcal{C}}_B,$ where the endpoints are $0$-cells in $\ol{\mathcal{C}}_A$ and $\ol{\mathcal{C}}_B$; then $\ol{\mathcal{C}}_{A*B}$  is connected and has the homotopy type of an Eilenberg-MacLane space $K(A*B,1).$ Choose a map $\phi: S^1\to \ol{\mathcal{C}}_{A*B}^{(1)}$ that represents $\sqrt[A*B]{r},$ where $\ol{\mathcal{C}}_{A*B}^{(1)}$ denotes the $1$-skeleton of $\ol{\mathcal{C}}_{A*B}.$ We will assume that the basepoint for $S^1,$ goes under $\phi$ to a vertex $\mathbf{v}$ of $\ol{\mathcal{C}}_{A*B}$.

Let $\ol{\mathcal{C}}_{G_r}$ be a model for $K(G_r,1).$ If $G_r=1$ we may think of $\ol{\mathcal{C}}_{G_r}$ as a disk and $p\colon S^1\to \ol{\mathcal{C}}_{G_r}$ as the natural inclusion to the boundary. If $G_r$ is a non-trivial cyclic group of order $\log_{A*B}(r),$  $\ol{\mathcal{C}}_{G_r}$ is a CW-complex with one cell in each dimension and $p\colon S^1\to \ol{\mathcal{C}}_{G_r}$ is the natural projection.

In \cite[Theorem 1]{Howie84}, it is showed that the following push-out (Figure \ref{fig:pushout}) has the homotopy type of $K(G,1)$.
\begin{figure}[ht]
\centerline{
\xymatrix{
S^1\ar[r]^{\phi}\ar[d]_p& \ol{\mathcal{C}}_{A*B} \\
\ol{\mathcal{C}}_{G_r}
}
}\caption{The push-out that gives a $K(G,1)$.}\label{fig:pushout}
\end{figure}
In the sequel, the push-out of this diagram will be denoted $\ol{\mathcal{C}}_G.$

Using this construction, in \cite[Proposition 7]{Howie84} Howie shows that there is a sequence of $\Z G$-modules
$$0\to \Z[G/G_r]\oplus P\to P_{n-1}\to \dots\to P_0\to \Z $$
where $P, P_{n-1},\dots, P_0$ are $\Z G$-projective, and thus the subgroup $G_r$ and $G$ satisfy the condition (C) by Theorem \ref{Thm:Howie}.

 We now describe how to obtain an $\uE G$ from $\ol{\mathcal{C}}_G$   with $\dim \uE G^{sing}\leq 0.$ Let $\mathcal{C}_G$ denote the universal cover of  $\ol{\mathcal{C}}_G.$ Observe that if $\sqrt[A*B]{r}=r,$ then $G$ is torsion-free, and we take $\mathcal{C}_G$ as our model for $\uE G$.

Assume then $\sqrt[A*B]{r}\neq r,$ so we will modify $\mathcal{C}_G$ to have all the singular action in dimension 0.
Let $\ol{\mathbf{\alpha}}$ be the 2-cell of  $\mathcal{C}_{G_r}$ and let $\mathbf{\alpha}$ denote a lift of $\ol{\mathbf{\alpha}}$ in $\mathcal{C}_G.$
For $n> 2$ we remove $G$-equivariantly the orbit of the $n$-dimensional cells of $\mathcal{C}_G$ corresponding to the $n$-dimensional cell of $\ol{\mathcal{C}}_{G_r},$ and we identify two $2$-cells $g\mathbf{\alpha}$ and $g'\mathbf{\alpha}$ if $g'\in gG_r.$ Notice that this two cells have the same boundary. We denote the $2$-cell identified with $\mathbf{\alpha}$ by  $G_r{\mathbf{\alpha}} $, and we claim that the obtained space, denoted by $\mathcal{C}_G',$ is still contractible.

The latter construction is not a $G$-$CW$-complex, because $\sqrt[A*B]{r}$ fixes the $2$-cell $G_r{\mathbf{\alpha}} $  but does not fix it pointwise since it does not fix the 1-cells where it is attached. So, to obtain a $G$-$CW$-complex, we subdivide each 2-cell $gG_r\mathbf{\alpha}$ like in the case of aspherical presentations. Formally, we remove the $G$-orbit of  $G_r\mathbf{\alpha}$ and  we add a $G/G_r$-orbit of $0$-cells $\{g G_r \mathbf{u} : g\in G\}$, and  a $G$-orbit of $1$-cells from $\{g\mathbf{f}: g\in G\}$  with $\mathbf{f}$ attached to $g \mathbf{v}$ to $gG_r \mathbf{u}$ and finally a $G$ orbit of $2$-cells $\{g\mathbf{\beta}:g\in G\}$ with $g\beta$ attached to the path that starts at $g\sqrt[A*B]{r} \mathbf{v},$ then goes through the edge $g\sqrt[A*B]{r} \mathbf{f,}$ then goes through $g \mathbf{f}^{-1}$ and finally through the subpath of the lift of $\phi$ that goes from $g\mathbf{v}$ to $g\sqrt[A*B]{r}\mathbf{v}.$

We denote this new complex by $\mathcal{C}_G''.$ Clearly it is a $G$-$CW$-complex. Let $H$ be a finite subgroup of $G,$ by (C) there exists a unique $gG_r\in G/G_r$ such that $H\leq g G_r g^{-1},$ and therefore $H$ fixes only the $0$-cell $g\mathbf{u}$. Then  $\mathcal{C}_G''$ is an $\uE G$ and $\dim ( \mathcal{C}''_G)^{sing}=0.$

Let us now prove the previous claim. We have to show that the space $\mathcal{C'}_G$ is simply connected and acyclic.  Since the removed cells where attached to each other, the space remains simply connected.
By construction of the push-out, we have a Mayer-Vietoris exact sequence which locally looks like
$$\dots\to \Hop_2(\ol{\mathcal{C}}_{A*B})\oplus \Hop_2(\ol{\mathcal{C}}_{G_r})\to \Hop_2(\ol{\mathcal{C}}_G)\to \Hop_1 S^{1}\to \Hop_1(\ol{\mathcal{C}}_{A*B})\oplus \Hop_1(\ol{\mathcal{C}}_{G_r})\to\dots $$
Since $\phi$ is injective at the $\pi_1$ level, the map $\Hop_1 S^{1}\to \Hop_1(\ol{\mathcal{C}}_{A*B})\oplus \Hop_1(\ol{\mathcal{C}}_{G_r})$ is inyective. Moreover, by construction of $\mathcal{C}_G',$ for $i\geq 2,$ $\Hop_i(\ol{\mathcal{C}'}_G)=\Hop_i(\ol{\mathcal{C}}_{A*B}).$ Hence for $i\geq 2,$ $\Hop_i(\ol{\mathcal{C}'}_G)=\Hop_i(\ol{\mathcal{C}}_{A*B})=\Hop_i(\ol{\mathcal{C}}_G)=0.$ Since the boundary of  $\mathbf{\alpha}$ and $\sqrt[A*B]{r}\mathbf{\alpha}$ in the 1-skeleton is the same, the identification do not change the $\Hop_1(\ol{\mathcal{C}}_G).$ Then the space  $\mathcal{C}_G'$ is acyclic.

Let $(C^A, d^A)$ and $(C^B,d^B)$ denote the integral cellular chain complexes of  ${\mathcal{C}}_A$ and ${\mathcal{C}}_B,$ the universal covers of $\ol{\mathcal{C}}_A$ and $\ol{\mathcal{C}}_B,$  Then the integral cellular chain complex $(C,d)$ of ${\mathcal{C}''}_G$ has the form at the module level
\begin{itemize}
\item $C_i\cong(\Z G\otimes_{\Z A} C_i^A)\oplus (\Z G\otimes_{\Z B} C_i^B) \text{ for } i\geq 2,$

\item $C_1\cong (\Z G\otimes_{\Z A} C_1^A)
\oplus (\Z G\otimes_{\Z B} C_1^B)\oplus \Z[G\times\{e,f\}],$

\item $C_0\cong (\Z G\otimes_{\Z A} C_0^A)\oplus (\Z G\otimes_{\Z B} C_0^B)\oplus \Z G/G_r .$
\end{itemize}
and $d_k\colon C_k\to C_{k-1}$ are induced by $(1\otimes d_k^A)\oplus (1\otimes d_k^B)$ for $k\geq 2$.

\subsection{Hempel Groups and Cohen-Lyndon asphericity}

In this subsection we discuss a family of two-relator groups wich include one-relator quotients of fundamental groups of surfaces. Originally, Hempel \cite{Hempel90} and Howie \cite{Howie04} studied one-relator quotients of the fundamental groups of orientable surfaces from a topological point of view. Hempel showed that such a group is an \emph{HNN}-extension of a one relator group. These groups have recently attracted the attention of various authors, see \cite{ADL}, \cite{Bogopolski} or \cite{HowieSaeed}.

In \cite{ADL} a family of two-relator groups with a presentation of the form
\begin{equation}\label{eq:tworelpres}
\prs{x,y,z_1,\dots,z_d}{[x,y]\cdot u, r},
\end{equation}
where $d\geq 1,$ $u\in \gp{z_1,\dots,z_d}{\quad}$ and $r\in \gp{x,y,z_1,\dots,z_d}{\quad},$ was studied.

By the classification of closed surfaces, any closed surface $S$ with Euler characteristic $\leq -2$ can be decomposed as a connected sum of a torus and projective planes or a torus and more tori. In any case, the fundamental group of $S$ admits a presentation of the form $\prs{x,y,z_1,\dots,z_d}{[x,y]\cdot u}$ with $u\in \gp{z_1,\dots,z_d}{\quad}.$

We will restrict to a subfamily of these two-relator groups, which we called {\it Hempel groups}. The definition involves some technicality, but one may think a Hempel group as a group admitting a presentation as \eqref{eq:tworelpres} where $r$ cannot be replaced by a positive power $x.$ If $Y$ is a subset of a set $X$, we say that $g\in \gp{X}{\;}$ {\it involves} some element of $Y$ if there is some element of $Y$ in the freely reduced word in $X$ representing $g.$

\begin{Def}\label{D:hempel}
Let $d\geq 1,$ $F=\gp{x,y,z_1,\dots, z_d}{\quad},$ and for $f\in F,$ $i\in \Z$ we denote $\lconj{y^i}{f}$ by $\lconj{i}{f}.$ Let also $X_1\coloneqq \{\lconj{1}{x}\}\cup\{\lconj{i}{z_j}:j=1,\dots,d; i=0,1,\dots\}$, $u\in\gp{z_1,\dots,z_d}{\quad}$ and $r\in F.$ We say that $r$ is a {\it Hempel relator} for the presentation $\prs{x,y,z_1,\dots,z_d}{[x,y]\cdot u},$ and that  $\prs{x,y,z_1,\dots,z_d}{[x,y]u,r}$ is a {\it Hempel presentation,} if the following conditions hold:
\begin{enumerate}
\item[{\normalfont (H1).}] The element $r$ belongs to $\gp{X_1}{\quad}\leqslant F.$
\item[{\normalfont (H2).}] In $\gp{X_1}{\quad},$ $r$ is not conjugate to any element of $\gen{(\lconj{0}{u})^{-1}\cdot (\lconj{1}{x})}.$
\item[{\normalfont (H3).}] With respect to $X_1,$ $r$ is cyclically reduced.
\item[{\normalfont (H4).}] With respect to $X_1,$ $r$ {\it involves} some element of $\{\lconj{0}{z_1},\dots,\lconj{0}{z_d}\}.$
\end{enumerate}

A group $G$ admitting a Hempel presentation will be called a {\it Hempel group.}
\end{Def}

\begin{Lem}[{\cite[Lemma 5.4]{ADL}}]\label{L:hempelrel}
Let $d\geq 1,$ $F=\gp{x,y,z_1,\dots, z_d}{\quad},$ $r\in F,$ and $u\in \gp{z_1,\dots,z_d}{\quad}.$ Then there exists $w\in F,$ $v\in \gen{\lconj{F}{([x,y] u)
}}$ and $\alpha\in \mathop{Aut}(F)$ that fixes $[x,y]$ and $z_1,\dots z_d$ such that $r'=\lconj{w}{(v\alpha(r))}$ is either a non-negative power of $x$ or a Hempel relator for $\prs{x,y,z_1,\dots,z_d}{[x,y]u}.$ \hfill\qed
\end{Lem}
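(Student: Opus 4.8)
The plan is to exploit the decomposition of $F$ along the $y$-exponent and to rewrite $r$ modulo the normal closure $K:=\gen{\lconj{F}{([x,y]u)}}$ until it lands in $\gp{X_1}{\quad}$. Write $\theta\colon F\to\Z$ for the homomorphism sending $y\mapsto 1$ and all other generators to $0$, and let $N=\ker\theta$. Then $N$ is free on $\{\lconj{i}{x},\ \lconj{i}{z_j}: i\in\Z,\ 1\le j\le d\}$, and a direct computation gives $[x,y]u=\lconj{0}{x}\,(\lconj{1}{x})^{-1}\,u$; since $[x,y]u\in N$ and $N\trianglelefteq F$ we have $K\le N$, with $K$ the normal closure in $N$ of the shifted relators $\rho_i:=\lconj{i}{x}\,(\lconj{i+1}{x})^{-1}\,\lconj{i}{u}$, $i\in\Z$. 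Note that the element of (H2) is $s:=(\lconj{0}{u})^{-1}\lconj{1}{x}$, and that $\rho_0^{-1}=s\,x^{-1}\in K$, so $s\equiv x\pmod K$.

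First I would use the automorphism move to normalise exponents. The assignments $x\mapsto xy,\ y\mapsto y$ and $x\mapsto x,\ y\mapsto yx$ (extended by the identity on the $z_j$) fix $[x,y]$ and the $z_j$, and on the $(x,y)$-exponent–sum vector they realise a generating pair of $\mathrm{SL}_2(\Z)$. Hence there is such an $\alpha$ carrying the exponent–sum vector $(a,b)$ of $r$ to $(g,0)$, where $g=\gcd(a,b)\ge 0$; in particular $\alpha(r)\in N$ and the $x$-exponent sum of $\alpha(r)$ is the non-negative integer $g$. Since $\alpha$ fixes $[x,y]$ and the $z_j$ it fixes $[x,y]u$, hence preserves $K$, so the whole set-up survives this move.

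Next I would bring $\alpha(r)$ into a non-negative index window and eliminate the $x$-generators. Writing $\alpha(r)$ as a reduced word in the free generators of $N$, the indices occurring lie in a finite interval $[m,M]$; conjugating by a power of $y$ (a shift $\lconj{i}{(\cdot)}\mapsto\lconj{i+k}{(\cdot)}$) I may assume this interval is $[0,L]$. Now I eliminate each $\lconj{i}{x}$ with $i\ne 1$ in favour of $\lconj{1}{x}$, using only the relators $\rho_i$ with $i\ge 0$: the identities $\lconj{i+1}{x}=\lconj{i}{u}\,\lconj{i}{x}$ express $\lconj{i}{x}$ for $i\ge 2$ through $\lconj{1}{x}$ and $z$'s of index $\ge 1$, while $\rho_0$ gives $\lconj{0}{x}=(\lconj{0}{u})^{-1}\lconj{1}{x}$, involving only $\lconj{0}{z_j}$. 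Because only non-negative indices are ever introduced, the resulting word $r_1$ lies in $\gp{X_1}{\quad}$; as $r_1$ and $\alpha(r)$ have the same image in $N/K$ we have $v:=r_1\,\alpha(r)^{-1}\in K$, so $r_1=v\,\alpha(r)$ realises (H1). Finally I cyclically reduce $r_1$ as a word in $X_1$ by a conjugation, which I fold into $w$, obtaining (H3).

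The crux is the dichotomy that produces either (H2) and (H4) or the power of $x$. I would argue that if the cyclically reduced word does not genuinely involve a bottom generator $\lconj{0}{z_j}$ — equivalently, if it is conjugate in $\gp{X_1}{\quad}$ to a power of $s$ (the $x$-directional case) — then, using $s\equiv x\pmod K$ and the normality of $K$, the coset can be represented by $x^{g}$; tracking the conjugations and $K$-multiplications back into a single $v\in K$ and a single $w$ shows $r'=\lconj{w}{(v\,\alpha(r))}=x^{g}$, a non-negative power of $x$ since $g\ge 0$. Otherwise the normal form involves some $\lconj{0}{z_j}$, giving (H4), and is not conjugate to a power of $s$, giving (H2); here one may have to re-choose the shift in the previous step so that the bottom of the window carries such a generator, the only obstruction to which is precisely $x$-directionality. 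The main difficulty is exactly this last step: reconciling the requirement that the surviving $x$-generator sit at index $1$ with the requirement that the $z$-generators occupy non-negative indices, and showing that the single distinguished conjugacy class (powers of $s$) accounts for every case in which this cannot be arranged. This is where the malnormality, i.e.\ the HNN structure, carried by the surface relator $[x,y]u$ does the real work.
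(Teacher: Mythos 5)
First, a remark on the comparison you asked for: the paper does not prove this lemma at all --- it is quoted from \cite[Lemma 5.4]{ADL} with a \verb|\qed| --- so your proposal can only be judged on its own terms. Your setup is the standard one and is essentially correct: $K=\gen{\lconj{F}{([x,y]u)}}$ lies in the kernel $N$ of the $y$-exponent map, $N$ is free on the $\lconj{i}{x},\lconj{i}{z_j}$, $K$ is the normal closure in $N$ of the shifted relators $\rho_i$, the two transvections fix $[x,y]$ and the $z_j$ and realise $\mathrm{SL}_2(\Z)$ on the $(x,y)$-exponent pair, and the rewriting $\lconj{i+1}{x}=\lconj{i}{u}\cdot\lconj{i}{x}$ pushes a shifted conjugate of $\alpha(r)$ into $\gen{X_1}$ modulo $K$; the bookkeeping collecting all moves into a single $w$ and $v$ works because $K$ is normal. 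Up to (H1) and (H3) your argument is sound.

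The gap is the dichotomy, and it is not merely an omitted detail: the equivalence you use as its pivot --- ``the cyclically reduced $X_1$-word fails to involve some $\lconj{0}{z_j}$ if and only if it is conjugate in $\gen{X_1}$ to a power of $s=(\lconj{0}{u})^{-1}\lconj{1}{x}$'' --- is false in both directions. For one direction, take $r=x$ (with $u\neq 1$): your elimination step replaces $\lconj{0}{x}$ by $s$ itself, which \emph{does} involve $\lconj{0}{z_j}$ and is cyclically reduced, yet it is conjugate to a power of $s$, so (H2) fails and your ``otherwise'' branch would wrongly declare it a Hempel relator instead of routing it to $r'=x$. For the other direction, the words $\lconj{1}{z_1}$ and $\lconj{1}{x}$ are cyclically reduced in $X_1$, involve no $\lconj{0}{z_j}$, and are not conjugate in the free group $\gen{X_1}$ to any power of $s$ (the cyclically reduced form of $s^{n}$ has $X_1$-length $n(|u|+1)\geq 2|n|$). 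Repairing this forces exactly the loop you gesture at: shift down by $y^{-1}$, re-express $\lconj{0}{x}=(\lconj{0}{u})^{-1}\lconj{1}{x}$, cyclically reduce again (which can destroy the newly created index-$0$ letters, e.g.\ $xz_1\sim\lconj{1}{x}\sim x$ modulo $K$ when $u=z_1$), and iterate. One must prove that this shift--rewrite--reduce process terminates, and that the terminal cases are precisely ``a word satisfying (H2)--(H4)'' or ``a non-negative power of $x$''; that termination argument, typically an induction on $X_1$-length via the HNN/Britton normal form of $F/K$ over $\gen{X_1}$, is the actual content of \cite[Lemma 5.4]{ADL} and is asserted rather than supplied in your proposal.
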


\begin{Ex}
Let $S$ be the fundamental group of an orientable, compact, connected, closed surface of genus $g\geq 2$, and $s$ an element of $S$. Then $S/\gen{\lconj{S}{s}}$ is a Hempel group.

The group $S$ has presentation  $\prs{x_1,y_1,\dots, x_g,y_g}{[x_1,y_1]\cdots[x_g,y_g]}.$ Let $F=\gp{x_1,y_1,x_g,y_g}{\quad}$ and $r\in F$ such that the image of $r$ under the natural projection to $S$ is $s.$

We apply the process described in Lemma \ref{L:hempelrel} to $r$ and the above presentation of $S$. If $r'$ is a Hempel relator, then $S/\gen{\lconj{S}{s}}$is a Hempel group; if not, $r'=x_1^m.$ In an analogous way, we can apply the same process to $x_1^m$ and the presentation $\gp{x_2,y_2,\dots, x_g,y_g,x_1,y_1}{[x_2,y_2]\cdots[x_g,y_g][x_1,y_1]}.$
Since $\alpha$ fixes $x_1$,  we obtain a Hempel relator  and hence $S/\gen{\lconj{S}{s}}$ is a Hempel group.
\end{Ex}

In \cite[Notation 6.2]{ADL} is shown that Hempel groups are \emph{HNN}-extensions of one-relator groups, and it is implicit in \cite[Theorem 7.3]{ADL}, that Hempel presentations are aspherical in the sense of \ref{ss:aspherical}.

Instead of invoking  this results, we are going to show something stronger, namely that Hempel groups are Cohen-Lyndon aspherical, which is one of the strongest ways of being aspherical.

In \cite{CohenLyndon}, D.E. Cohen and R.C. Lyndon showed that the normal subgroup generated by a single element $r$ of a free group $F$ has free basis form of certain sets of conjugates of $r.$

The following definition is not standard, but suits with our objectives. See \cite[III.10.7]{LyndonSchupp}.
\begin{Def}\label{D:cla}
Let $F$ be a free group and $R$ a subset of $F.$ Let $G\coloneqq F/\gen{\rconj{F}{R}}$ and  for each $r\in R,$ $G_r\coloneqq \mathbf{C}_F(r)/\gen{r}=\gp{\sqrt[F]{r}}{r}$ a finite cyclic subgroup of $G$. Then $R$ is {\it Cohen-Lyndon aspherical} in $F$ if and only if
\begin{enumerate}[{\normalfont (i).}]
\item $\gen{\lconj{F}{R}}$ has a basis of conjugates of $R$;
\item\label{it:relationmodule} $\gen{\lconj{F}{R}}_{\text{ab}}$ with the left $G$-action induced by the left $F$-conjugation action $\gen{\lconj{F}{R}}$ is naturally isomorphic to $\oplus_{r\in R}\Z[G/G_r].$
\end{enumerate}
\end{Def}

\begin{Def}
A {\it Cohen-Lyndon aspherical presentation} $\prs{X}{R}$ is a presentation where  $R$ is a Cohen-Lyndon aspherical subset of $\gp{X}{\quad}.$

A {\it Cohen-Lyndon aspherical group} is a group that admits a Cohen-Lyndon aspherical presentation.
\end{Def}

As mentioned above, the first example of Cohen-Lyndon groups are one-relator groups \cite{CohenLyndon}. A {\it one-relator group} is a group which admits a presentation $\prs{X}{R}$ with $R$ having a single element.

\begin{Thm}[The Cohen-Lyndon theorem] \label{CL}
If $F$ is a free group and $r\in F-\{1\},$ then $\{r\}$ is Cohen-Lyndon aspherical in $F.$\qed
\end{Thm}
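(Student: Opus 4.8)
The plan is to reduce the theorem to a single freeness statement and then read off both clauses of Definition \ref{D:cla} from it. Write $s=\sqrt[F]{r}$ and $n=\log_F r$, so that $r=s^n$, the element $s$ is not a proper power, and $\mathbf{C}_F(r)=\gen{s}\cong\Z$ (centralizers of non-trivial elements in a free group are the maximal cyclic subgroups containing them). Put $N=\gen{\lconj{F}{r}}$ and $G=F/N$; the image $\bar s$ of $s$ generates $G_r=\gen{s}N/N\cong\gen{s}/\gen{r}\cong\Z/n\Z$. Let $U\subseteq F$ be a left transversal of $N\gen{s}$ in $F$, so $U$ is in bijection with $F/N\gen{s}=G/G_r$. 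The whole theorem rests on the following assertion: \emph{$N$ is freely generated by the set $\{uru^{-1}:u\in U\}$.}

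Granting this assertion, clause (i) of Definition \ref{D:cla} is immediate, since $\{uru^{-1}\}$ is by construction a set of conjugates of $r$. For clause (ii) I would argue that the induced $\Z$-basis $\{[uru^{-1}]:u\in U\}$ of the free abelian group $N_{\text{ab}}$ is permuted by the conjugation $G$-action exactly as $G$ acts on $G/G_r$. Fix $f\in F$ and $u\in U$, and decompose $fu=u'mc$ with $u'\in U$, $m\in N$ and $c\in\gen{s}$ (possible since $N\trianglelefteq F$, so $N\gen{s}=\gen{s}N$ is the product set). Then, using that $c$ centralizes $r$,
$$(fu)\,r\,(fu)^{-1}=u'mcrc^{-1}m^{-1}u'^{-1}=u'mrm^{-1}u'^{-1}=(u'mu'^{-1})(u'ru'^{-1})(u'mu'^{-1})^{-1}.$$
As $u'mu'^{-1}\in N$ and conjugation by elements of $N$ is trivial on $N_{\text{ab}}$, this equals $[u'ru'^{-1}]$ in $N_{\text{ab}}$. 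Hence $\bar f\cdot[uru^{-1}]=[u'ru'^{-1}]$ with $u'\equiv fu\pmod{N\gen{s}}$, which is precisely left translation on $G/G_r$; therefore $N_{\text{ab}}\cong\Z[G/G_r]$ naturally as $\Z G$-modules, giving clause (ii).

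To prove the freeness assertion I would pass to topology. Realize $F=\pi_1(\Gamma)$ for a graph $\Gamma$ and let $q\colon\widehat\Gamma\to\Gamma$ be the regular covering with deck group $G$ corresponding to $N\trianglelefteq F$; since $\widehat\Gamma$ is a graph, $\pi_1(\widehat\Gamma)\cong N$ is free by Nielsen--Schreier. In the universal cover (a tree) $\widetilde\Gamma$, the axis $A$ of $s$ is a line with $\mathrm{Stab}_F(A)=\gen{s}$, and since $N\cap\gen{s}=\gen{r}$ its image in $\widehat\Gamma$ is a circle $C=A/\gen{r}$ carrying the class of $r$. The $G$-translates $\{\bar u\,C\}$ are indexed by $G/G_r$ because $\mathrm{Stab}_G(C)=G_r$, and they represent exactly the elements $uru^{-1}$. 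The assertion is thus equivalent to the statement that these relator circles form a free basis of $\pi_1(\widehat\Gamma)$, i.e. that one can choose a spanning tree of $\widehat\Gamma$ leaving precisely one non-tree edge on each circle $\bar u\,C$ and none elsewhere; equivalently, that attaching a $2$-disc along each $\bar u\,C$ yields a contractible complex.

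The main obstacle is precisely this last freeness step: showing that the relator circles carry all of the first homology of $\widehat\Gamma$ and satisfy no relations. At the level of $H_1$ this is Lyndon's Identity Theorem (already invoked in \ref{ss:aspherical}), but upgrading it to an honest free basis of $N$ — rather than merely a basis of $N_{\text{ab}}$ — is strictly stronger and is where the real content lies. I would establish it by the Magnus--Moldavanskii hierarchy, inducting on the length of $r$: after a Nielsen change of basis of $F$ arranging that some generator has exponent sum zero in $r$ (introducing a stable letter if necessary), $G$ becomes an HNN-extension of a one-relator group with a strictly shorter relator, and one transports the free basis of conjugates through the HNN normal form by means of the action on the associated Bass--Serre tree. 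The delicate bookkeeping — verifying that the transversal $U$ and the resulting conjugates assemble coherently across the HNN step — is the crux; the base case $r=x_1^{\,n}$ is handled directly from the free-product decomposition $F=\gen{x_1}\ast F'$.
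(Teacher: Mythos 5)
First, a point of comparison: the paper does not prove this statement at all --- it is quoted from \cite{CohenLyndon} without proof, and the surrounding text records that \cite{CCH} reprove it by Magnus induction via Theorem \ref{T:CLAcch} and Lemma \ref{L:claelim}. Your reduction of both clauses of Definition \ref{D:cla} to the classical free-basis statement is correct and is a worthwhile explicit verification of something the paper takes for granted: in particular the computation $(fu)r(fu)^{-1}\equiv u'ru'^{-1}$ in $N_{\text{ab}}$, using that $\gen{s}$ centralizes $r$ and that $N$-conjugation acts trivially on $N_{\text{ab}}$, correctly identifies the induced $G$-action on the basis with left translation on $G/G_r$.

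Two genuine problems remain. First, your ``freeness assertion'' is stated for an \emph{arbitrary} left transversal $U$ of $N\gen{s}$ in $F$, and in that generality it is false: changing the transversal replaces each $uru^{-1}$ by an $N$-conjugate of it, and conjugating the members of a free basis individually by group elements can destroy generation (for instance $\{yxy^{-1},xyx^{-1}\}$ generates a proper subgroup of infinite index in $\gen{x,y}$). The Cohen--Lyndon theorem asserts the \emph{existence} of a suitable transversal; your derivation of clauses (i) and (ii) survives verbatim once $U$ is taken to be the transversal the theorem provides, but the assertion must be phrased that way. Second, and more seriously, the entire content of the theorem is concentrated in that freeness assertion, and your treatment of it stops at a roadmap: you name the Magnus--Moldavanskii hierarchy and identify the transport of the basis of conjugates through the HNN normal form as ``the crux'' without carrying it out. (You also quietly use $N\cap\gen{s}=\gen{r}$, i.e.\ that $\sqrt[F]{r}$ has order exactly $\log_F r$ in $G$, which is itself a nontrivial fact normally extracted from the same induction.) This missing step is precisely what \cite{CCH} formalises: Theorem \ref{T:CLAcch} says that Cohen--Lyndon asphericity passes through the relevant HNN-extensions with Magnus associated subgroups, and Lemma \ref{L:claelim} eliminates the stable-letter relations afterwards; the paper's proof of Theorem \ref{Hempel} shows exactly this bookkeeping in action. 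So your approach is the right one and coincides with the source the paper cites, but as written it is a (correct) reduction plus an appeal to the literature rather than a self-contained proof.
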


In \cite{CCH}, Chiswell, Collins and Huebschmann proved this theorem using the Magnus induction. Let us briefly recall this technique (also called \emph{Magnus breakdown}), which is probably the main tool to study one-relator groups, and was used by Magnus to prove the celebrated Freiheitssatz (see \cite[Section IV.5]{LyndonSchupp} for a reference).

\begin{Rev}[Magnus Induction]
Let $\prs{X}{r}$ be a one-relator presentation of a group $G.$ A {\it Magnus subgroup} $G$ with respect to the presentation $\prs{X}{r}$ is a subgroup generated by a subset $Y$ of $X$ such that $r$ is not a word on $Y^{\pm 1};$ Magnus' Freiheitssatz states that this subgroup is freely generated by $Y.$

The Magnus breakdown process takes a one-relator group and express it as (a subgroup of) an \emph{HNN}-extension of a one-relator group with a ``simpler relation''. More precisely, given a one-relator group $G$, there exists a finite sequence of groups $G_0,G_1,\dots, G_n,$ such that
\begin{enumerate}
\item $G_n=G$ and $G_0$ is a cyclic group;
\item $G_0,G_1,\dots, G_n,$ are one-relator groups;
\item For $i=1,\dots, n,$ $G_i$ is a subgroup of an \emph{HNN}-extension of $G_{i-1}$  where the associated subgroups are Magnus subgroups with respect to the same presentation.
\end{enumerate}
\end{Rev}

In \cite{CCH}, Chiswell, Collins and Huebschmann give a proof of the Cohen-Lyndon theorem using the Magnus induction and two results which respectively allow to construct new Cohen-Lyndon aspherical presentations from a given one by \emph{HNN}-extensions:

\begin{Thm} [{\cite[Theorem 3.4]{CCH}}] \label{T:CLAcch}
Let $F$ be a free group and $R$ a subset of $F.$ Let $\{x_{1},\dots, x_{d}\}$ and $\{y_{1},\dots, y_{d}\}$ be two subsets in $F$ such that $\{x_{1},\dots,x_d\}$ freely generates $\gen{x_{1},\dots,x_d}$ and $\{y_{1},\dots, y_{d}\}$ freely generates $\gen{y_{1},\dots, y_{d}}$ and $$\gen{x_{1},\dots,x_d}\cap\gen{\lconj{F}{R}}=\gen{y_{1},\dots,y_d }\cap \gen{\lconj{F}{R}}=\emptyset.$$
Let $t$ be a symbol.

Then $R^{+}\coloneqq R\cup\{(\lconj{t}{x_i})y_i^{-1}: i\in 1,\dots,d\}$ is Cohen-Lyndon aspherical in $F*\gp{t}{\quad}$ if and only if $R$ is Cohen-Lyndon aspherical in $F.$ \hfill\qed
\end{Thm}

These authors provide also a method to simplify presentations:

\begin{Lem}[{\cite[Lemma 5.1]{CCH}}]\label{L:claelim}
Let $F$ be a free group, $R$ a subset of $F*\gp{x}{\quad}$ and $f\in F.$ Let $\phi:F*\gp{x}{\quad}\to F,$ $x\mapsto f,$ and $h\mapsto h$ for all $h\in F.$
If $R^+\coloneqq R\cup\{xf^{-1}\}$ is Cohen-Lyndon aspherical in $F*\gp{x}{\quad}$, then $\phi(R)$ is Cohen-Lyndon aspherical in $F.$\hfill\qed
\end{Lem}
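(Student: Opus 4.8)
The plan is to realise $\phi$ as a retraction and to transport the Cohen--Lyndon data of $R^{+}$ across it. Write $E\coloneqq F*\gp{x}{\quad}$, $N\coloneqq\gen{\lconj{E}{R^{+}}}$ and $M\coloneqq\gen{\lconj{F}{\phi(R)}}$. First I would record the elementary algebra. Since $\phi$ is the identity on the free factor $F$, it is a retraction of $E$ onto $F$ with kernel $K\coloneqq\gen{\lconj{E}{xf^{-1}}}$, so that $E=K\rtimes F$ and $F\cong E/K$. As $xf^{-1}\in R^{+}$ we have $K\leqslant N$, and since $\phi(R^{+})=\phi(R)$ we get $\phi(N)=M$ and $\phi^{-1}(M)=N$; hence $\phi$ induces an isomorphism $G\coloneqq E/N\to F/M$ under which it becomes the identity, so both sides present the same group $G$ and $\phi$ is $G$-equivariant. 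Intersecting the splitting $E=K\rtimes F$ with $N$ gives $N=K\rtimes M$, with $\phi|_{N}\colon N\to M$ the retraction of kernel $K$ and $M=N\cap F$.

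Next I would establish condition (i) of Definition \ref{D:cla}. Being Cohen--Lyndon aspherical, $N$ is free on a basis $\mathcal{B}$ of conjugates of $R^{+}$, which I split as $\mathcal{B}=\mathcal{B}_{R}\sqcup\mathcal{B}_{K}$, the conjugates of elements of $R$ and of $xf^{-1}$ respectively. The crucial point is that $\mathcal{B}_{K}$ already normally generates $K$ inside $N$. Taking the basis in its precise transversal form, $\mathcal{B}_{K}=\{\lconj{t}{(xf^{-1})}:t\in T\}$ with $T$ a left transversal of $N\,\mathbf{C}_{E}(xf^{-1})$ in $E$, any $e\in E$ can be written $e=t\,n\,c$ with $t\in T$, $n\in N$ and $c\in\mathbf{C}_{E}(xf^{-1})$; then $\lconj{e}{(xf^{-1})}$ is the conjugate of the basis element $\lconj{t}{(xf^{-1})}$ by $tnt^{-1}$, and $tnt^{-1}\in N$ because $N$ is normal in $E$. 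Thus every conjugate of $xf^{-1}$, and hence all of $K$, lies in $\gen{\lconj{N}{\mathcal{B}_{K}}}$, giving $K=\gen{\lconj{N}{\mathcal{B}_{K}}}$. Since $N$ is free on $\mathcal{B}_{R}\sqcup\mathcal{B}_{K}$, the quotient $M=N/K$ is free on the images $\phi(\mathcal{B}_{R})$; these are conjugates of $\phi(R)$, so $M$ has a basis of conjugates of $\phi(R)$, which is condition (i).

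For condition (ii) I would abelianise. The hypothesis gives $N_{\mathrm{ab}}\cong\bigoplus_{\rho\in R^{+}}\Z[G/G_{\rho}]$ as $\Z G$-modules, and by the previous paragraph $\phi_{*}\colon N_{\mathrm{ab}}\to M_{\mathrm{ab}}$ is a $\Z G$-map that annihilates the $\mathcal{B}_{K}$-part and carries the $\mathcal{B}_{R}$-part isomorphically onto $M_{\mathrm{ab}}$; under the Cohen--Lyndon isomorphism these parts are the summand $\Z[G/G_{xf^{-1}}]$ and $\bigoplus_{r\in R}\Z[G/G_{r}]$ respectively. Now $xf^{-1}$ is not a proper power in the free group $E$ (it is cyclically reduced of syllable length two, or equals $x$ when $f=1$), so $\mathbf{C}_{E}(xf^{-1})=\gen{xf^{-1}}$, the isotropy $G_{xf^{-1}}$ is trivial, and the annihilated summand is the free module $\Z G$. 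One is left with $M_{\mathrm{ab}}\cong\bigoplus_{r\in R}\Z[G/G_{r}]$, where $G_{r}$ denotes the image in $G$ of $\mathbf{C}_{E}(r)$.

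The step I expect to be the main obstacle is the final matching of isotropy: Definition \ref{D:cla} requires the summand attached to $r$ to be indexed by the image of $\mathbf{C}_{F}(\phi(r))$, whereas the computation above produces the image of $\mathbf{C}_{E}(r)$. One inclusion is formal, since $\phi(\mathbf{C}_{E}(r))\leqslant\mathbf{C}_{F}(\phi(r))$ always; and because $F$ is a free factor of $E$ it is closed under roots, so $\sqrt[F]{\phi(r)}=\sqrt[E]{\phi(r)}$ and the claim reduces to showing that $\phi(\sqrt[E]{r})$ is not a proper power in $F$. This genuinely needs the hypothesis---without Cohen--Lyndon asphericity of $R^{+}$ one can produce $r$ for which $\phi(\sqrt[E]{r})$ becomes a proper power (for instance $F=\gp{a}{\quad}$, $f=a^{2}$, $r=x$, where $R^{+}=\{x,xa^{-2}\}$ fails to be aspherical)---so the Cohen--Lyndon structure of $R^{+}$, through the transversal description of $\mathcal{B}$ together with Freiheitssatz-type control of centralisers, must be fed in precisely here. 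Once this primitivity is secured one has $G_{r}=G_{\phi(r)}$ for every $r\in R$, and conditions (i) and (ii) together show that $\phi(R)$ is Cohen--Lyndon aspherical in $F$.
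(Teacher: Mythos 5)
First, a point of comparison that matters here: the paper offers no proof of this lemma at all --- it is imported verbatim from \cite[Lemma 5.1]{CCH} and closed with a box --- so your proposal can only be measured against that source. Your overall strategy is the right one and is essentially the argument of \cite{CCH}: realise $\phi$ as the retraction of $E\coloneqq F*\gp{x}{\quad}$ onto $F$ with kernel $K=\gen{\lconj{E}{(xf^{-1})}}$, check that $N=K\rtimes M$ with $M=N\cap F$ and that $E/N\cong F/M\eqqcolon G$, split a basis of $N$ into the conjugates $\mathcal{B}_R$ of $R$ and the conjugates $\mathcal{B}_K$ of $xf^{-1}$, prove $K=\gen{\lconj{N}{\mathcal{B}_K}}$ to obtain condition (i), and quotient $N_{\text{ab}}$ by the free summand $\Z[G/G_{xf^{-1}}]\cong\Z G$ to obtain condition (ii). One caveat on the part you did write out: the ``precise transversal form'' of $\mathcal{B}_K$ on which your proof of $K=\gen{\lconj{N}{\mathcal{B}_K}}$ rests is built into the definition of Cohen--Lyndon asphericity used in \cite{CCH}, but it is \emph{not} part of Definition \ref{D:cla} of this paper, which only asks for the existence of \emph{some} basis of conjugates; with the paper's weaker definition that step is not available for free and needs a separate justification.

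The genuine gap is the one you flag in your final paragraph and then leave open: the matching of isotropy groups. You correctly observe that the computation delivers $M_{\text{ab}}\cong\oplus_{r\in R}\Z[G/\ol{G_r}]$ with $\ol{G_r}$ the image in $G$ of $\mathbf{C}_E(r)$, whereas Definition \ref{D:cla} demands the image of $\mathbf{C}_F(\phi(r))$; but you then only assert that the hypothesis ``must be fed in precisely here'', which is not a proof. Moreover, your proposed reduction --- show that $\phi(\sqrt[E]{r})$ is not a proper power in $F$ --- asks for more than the conclusion needs: what is required is only that $\mathbf{C}_E(r)$ and $\mathbf{C}_F(\phi(r))$ have the same image in $G$, not that they already agree in $F$, and it is not clear that the stronger primitivity statement even follows from the hypothesis. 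The clean way to close the gap is module-theoretic rather than Freiheitssatz-theoretic: under the natural isomorphism $M_{\text{ab}}\cong\oplus_{r\in R}\Z[G/\ol{G_r}]$ the class of $\phi(r)$ corresponds to the basis element $1\cdot\ol{G_r}$ of the $G$-set $G/\ol{G_r}$, whose stabiliser in $G$ is exactly $\ol{G_r}$; on the other hand every $c\in\mathbf{C}_F(\phi(r))$ satisfies $\lconj{c}{\phi(r)}=\phi(r)$ and hence fixes that class, so the image of $\mathbf{C}_F(\phi(r))$ in $G$ is contained in $\ol{G_r}$. The reverse containment is the formal one you already have, since $\phi(\sqrt[E]{r})$ is a power of $\sqrt[F]{\phi(r)}$. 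With that observation supplied, and the transversal issue above addressed, your argument is complete.
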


We are going to show that Hempel groups are Cohen-Lyndon aspherical and hence they lie in our family $\mathcal{G}_{cct}.$

\begin{Thm}
\label{Hempel}
Let $\prs{x,y,z_1,\dots,z_d}{[x,y]u,r}$ be a Hempel presentation of a certain group.

Then $\{[x,y]u, r\}$ is Cohen-Lyndon aspherical in $F=\gp{x,y,z_1,\dots, z_d}{\quad}$ and $\gp{x,y,z_1,\dots,z_d}{[x,y]u,r}$ is an HNN-extension of a one-relator group where the associated subgroups are Magnus subgroups.
\end{Thm}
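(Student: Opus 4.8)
The plan is to exhibit the Hempel group as an HNN-extension of an auxiliary one-relator group and then transport Cohen--Lyndon asphericity across this decomposition, using Theorem~\ref{CL} as the base case and Theorem~\ref{T:CLAcch} together with Lemma~\ref{L:claelim} as the inductive tools. Write $a:=\lconj{1}{x}$ and $z_j^{(i)}:=\lconj{i}{z_j}$; by (H1) the relator $r$ is a word in the free set $\{a\}\cup\{z_j^{(i)}:i\ge 0\}$, and I let $N$ be the largest index occurring among the $z_j^{(i)}$ in $r$ (nonempty by (H4), so $N\ge 0$). Let $\Phi$ be the free group on $\{a\}\cup\{z_j^{(i)}:0\le i\le N\}$, let $\bar r\in\Phi$ be $r$ read in these letters --- cyclically reduced by (H3) --- and set $B:=\Phi/\gen{\lconj{\Phi}{\bar r}}$, a one-relator group.

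First I would make the decomposition explicit. Solving $[x,y]u=1$ for $\lconj{1}{x}$ gives $\lconj{1}{x}=ux$ with $u$ a word in the $z_j^{(0)}$, equivalently $y^{-1}ay=u^{-1}a$, together with $yz_j^{(i)}y^{-1}=z_j^{(i+1)}$. These identities present $G$ as the HNN-extension of $B$ with stable letter $y$ and associated subgroups
\[
A_-=\gen{u^{-1}a,\ z_j^{(i)}:0\le i\le N-1},\qquad A_+=\gen{a,\ z_j^{(i)}:1\le i\le N},
\]
via the isomorphism $u^{-1}a\mapsto a$, $z_j^{(i)}\mapsto z_j^{(i+1)}$. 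Each listed generating set is a free basis (the only non-obvious member, $u^{-1}a$, differs from $a$ by the Nielsen factor $u^{-1}$, a word in the $z_j^{(0)}$), so $A_\pm$ are free of the expected rank. As subgroups, $A_-=\gen{a,z_j^{(i)}:0\le i\le N-1}$ omits every $z_j^{(N)}$ and $A_+$ omits every $z_j^{(0)}$; since $\bar r$ involves some $z_j^{(0)}$ by (H4) and some $z_j^{(N)}$ by maximality of $N$, the Freiheitssatz shows both are Magnus subgroups and inject into $B$, so $A_\pm\cap\gen{\lconj{\Phi}{\bar r}}=\{1\}$. The one case this Freiheitssatz argument misses is the cyclic factor $\gen{u^{-1}a}$, which is all of $A_-$ when $N=0$; here I would invoke (H2), which states exactly that $\bar r$ is not conjugate to a power of $(\lconj{0}{u})^{-1}\lconj{1}{x}=u^{-1}a$, to conclude through the description of torsion in one-relator groups that $u^{-1}a$ has infinite order in $B$ and hence $\gen{u^{-1}a}\cap\gen{\lconj{\Phi}{\bar r}}=\{1\}$. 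This proves the HNN assertion and verifies all hypotheses of Theorem~\ref{T:CLAcch}.

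The Cohen--Lyndon asphericity then follows formally. By Theorem~\ref{CL} the singleton $\{\bar r\}$ is Cohen--Lyndon aspherical in $\Phi$, so applying Theorem~\ref{T:CLAcch} with the two free bases above and stable letter $y$ yields that
\[
\{\bar r\}\cup\{y(u^{-1}a)y^{-1}a^{-1}\}\cup\{yz_j^{(i)}y^{-1}(z_j^{(i+1)})^{-1}:0\le i\le N-1\}
\]
is Cohen--Lyndon aspherical in $\Phi*\gp{y}{\quad}$. I then apply Lemma~\ref{L:claelim} repeatedly, using the relators $yz_j^{(i)}y^{-1}(z_j^{(i+1)})^{-1}$ in decreasing order of $i$ to substitute $z_j^{(i)}=y^{i}z_jy^{-i}$ and discard $z_j^{(1)},\dots,z_j^{(N)}$; the surviving relators become $r(a,\lconj{i}{z_j})$ and $y(u^{-1}a)y^{-1}a^{-1}$. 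Finally I pass from the basis $\{a,y,z_1,\dots,z_d\}$ of $F$ to $\{x,y,z_1,\dots,z_d\}$ through the change of generators $a=yxy^{-1}$; this is an automorphism of $F$, hence preserves Cohen--Lyndon asphericity, and a direct computation turns the first relator into $r$ and the second into $(\lconj{1}{([x,y]u)})^{-1}$. As Cohen--Lyndon asphericity of a set depends only on its relators up to conjugacy and inversion, this gives that $\{[x,y]u,r\}$ is Cohen--Lyndon aspherical in $F$.

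The crux is the second paragraph: verifying that $A_-$ and $A_+$ really meet $\gen{\lconj{\Phi}{\bar r}}$ trivially. For the generators of positive index this is the Freiheitssatz fed by (H4) and by the maximality of $N$, but the cyclic contribution $\gen{u^{-1}a}$ forces one to use (H2) together with the torsion theorem for one-relator groups. Everything after that is the routine bookkeeping of the substitutions in Lemma~\ref{L:claelim} and the check that the final Nielsen change of basis reproduces precisely the two Hempel relators.
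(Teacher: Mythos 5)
Your argument is correct and follows essentially the same route as the paper's proof: the same HNN decomposition of the Hempel group over the one-relator group on $\{\lconj{1}{x}\}\cup\lconj{[0,N]}{Z}$ with Magnus associated subgroups, the Cohen--Lyndon theorem as base case, Theorem~\ref{T:CLAcch} to adjoin the stable letter, and Lemma~\ref{L:claelim} to eliminate the redundant generators and relators before the final Nielsen change of basis. The only minor divergence is the degenerate case $N=0$, where the paper observes that by (H2) the relator, rewritten in the free basis $\{x\}\cup\lconj{0}{Z}$, must still involve some $z_j$, so the Freiheitssatz applies there as well; this is tighter than your appeal to the torsion theorem, which as stated would still require checking that $u^{-1}a$ is nontrivial in $B$ and not conjugate into its torsion.
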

The second part of the Theorem was proved in \cite[Notation 6.2]{ADL}.

\begin{proof}
For $f\in F,$ $i\in \Z$ we denote $\lconj{y^i}{f}$ by $\lconj{i}{f}.$  Let $$\lconj{[i,j]}{Z}\coloneqq \{\lconj{i}{z_1},\dots, \lconj{i}{z_d},\dots,\lconj{j}{z_1},\dots,\lconj{j}{z_d}\}.$$ We simply write $\lconj{i}{Z}$ for $\lconj{[i,i]}{Z}.$

By (H1) there exists a least integer $\nu$ such that $r$ lie in the subgroup $\gp{\lconj{1}{x},\lconj{[0,\nu]}{Z}}{\quad}.$ Since $\lconj{1}{x}=\lconj{0}{u}\cdot x,$ we can identify
$$\gp{\lconj{1}{x},\lconj{[0,d]}{Z}}{r}$$
 with
$$\gp{x,\lconj{[0,d]}{Z}}{r},$$ and thus view $r$ as an element of a free group with two specified free-generating sets. By (H2), $r$ is not conjugate to any element of $\gen{x}$ and hence with respect to $\{x\}\cup \lconj{[0,\nu]}{Z},$  $r$ involves some element of $\lconj{\nu}{Z}.$
By (H4), with respect to $\{\lconj{1}{x}\}\cup \lconj{[0,\nu]}{Z},$ $r$ involves some element of $\lconj{0}{Z}.$

Let
\begin{align*}
G_{[0,\nu]} &\coloneqq \gp{x,\lconj{[0,\nu]}{Z}}{r}=
\gp{\lconj{1}{x},\lconj{[0,\nu]}{Z}}{r},\\
G_{[0,(\nu-1)]} &\coloneqq \gp{x ,\lconj{[0,(\nu-1)]}{Z}}{\quad},\\
G_{[1,\nu]} &\coloneqq \gp{\lconj{1}{x} ,\lconj{[1,\nu]}{Z}}{\quad}.
\end{align*}

By Magnus' Freiheitssatz, the natural maps form $G_{[0,(\nu-1)]}$ and $G_{[1,\nu]}$ to $G_{[0, \nu]}$ are injective.

There is an isomorphism $y\colon G_{[0,(\nu-1)]}\to G_{[1, \nu]}$ given by ${x}\mapsto \lconj{1}{x}$ and $\lconj{i}{z_*}\mapsto \lconj{i+1}{z_*},$ and we can form the \emph{HNN}-extension $G_{[0,\nu]}\mathop{\ast}(y\colon G_{[0,(\nu-1)]}\to G_{[1, \nu)]})$ which gives us the group (recall that $\lconj{1}{x}=\lconj{0}{u}\cdot{x}$)
\begin{equation}\label{eq:HNNpres}\gp{x,y,\lconj{[0,\nu]}{Z}} {r,\, \lconj{y}{x}=\lconj{0}{u}\cdot x,\, (\lconj{y}{(\lconj{i}{z_*})}
=\lconj{i-1}{z_*}: i= 1,\dots \nu)}.\end{equation}

\noindent By the Cohen-Lyndon theorem, $\{r\}$ is Cohen-Lyndon aspherical in $\gp{x,\lconj{[0,\nu]}{Z}}{\quad}.$ Now by Theorem  \ref{T:CLAcch}, $\{r,\lconj{y}{x}=\lconj{0}{u}\cdot x\}\cup \{\lconj{y}{(\lconj{i}{z_*})}=\lconj{i-1}{z_*}: i= 1,\dots \nu)\}$ is Cohen-Lyndon aspherical in $\gp{x,y,\lconj{[0,\nu]}{Z}}{\quad}.$

Applying repeteadly Lemma \ref{L:claelim} we can eliminate the free factor $\gp{\lconj{[1,\nu]}{Z}}{\quad}$ and the set of relations $\{\lconj{y}{(\lconj{i}{z_*})}=\lconj{i-1}{z_*}: i= 1,\dots \nu\}$, so we obtain that $\{r,[x,y]u\}$ is Cohen-Lyndon aspherical in $\gp{x,y,z_1,\dots,z_d}{\quad}.$
\end{proof}

\subsection{Other examples}

In \cite{LuckStamm}, L\"{u}ck and Stamm are interested in groups where the finite subgroups satisfy a property similar to our condition (C). They provide two families of examples, which we adapt to satisfy condition (C).

\begin{itemize}
\item Extensions $1\to \Z^n\to G\to C\to 1,$ where $C$ is a finite and cyclic and the conjugation action of $C$ in $\Z^n$ is free outside   $0\in \Z^n$. See \cite[Lemma 6.3]{LuckStamm}.


\item Cocompact NEC-groups that do not contain finite dihedral subgroups. By \cite[Lemma 4.5]{LuckStamm}, they satisfy our property (C). An example of such groups are groups with presentation \begin{equation}\prs{a_1,\dots,a_r,c_1,\dots,c_t}{c_1^{\gamma_1}=\dots=c_t^{\gamma_t}=c_1^{-1}\cdots c_{t}^{-1}a_1^2\cdots a_r^2 = 1 }\end{equation} where $\gamma_i\geq 2$ for $i=1,\dots,t.$
\end{itemize}

\section{Bredon homology}

Let $G$ be a group in $\mathcal{G}_{cct}$. By (C) there is a model for the
classifying space for proper $G$-actions $\uE G$ with 0-dimensional singular part, which allows to compute the Bredon homology of $G$. In turn, this will open the way to describe the topological part of the Baum-Connes conjecture for these groups (see Section \ref{BC}).

\subsection{Background on Bredon homology} Our main source for this section has been \cite[Section 3]{MislinValette}, while the original and main reference goes back to \cite{Bredon}.

Let $G$ be a discrete group and $\mathfrak{F}$ a non-empty family of subgroups of $G$ closed under subgroups and conjugation. The reader should have in mind the family of finite subgroups of $G,$ which we will denote by $\mathfrak{Fin} (G)$, or simply $\mathfrak{Fin}$ if the group is understood.

The orbit category $\mathfrak{D}_{\mathfrak F}(G)$ is the category whose objects are left coset spaces $G/K$ with $K\in \mathfrak F,$ and morphism sets $\mor(G/K,G/L)$ given by the $G$-maps $G/K\to G/L;$ this set can be naturally identified with \begin{equation}\label{eq:mor}(G/L)^K\coloneqq\{gL\in G/L:KgL=gL\}=\{gL\in G/L:g^{-1}Kg\leqslant L\},\end{equation}
that is, the cosets fixed by the $K$-multiplication action on $G/L.$

Let $G$-$\Mod_{\mathfrak F}$ and $\Mod_{\mathfrak F}$-$G$ be respectively the category of covariant and contravariant functors  $\mathfrak D_{\mathfrak F}(G)\to \mathfrak{Ab}$ from the orbit category to the category $\mathfrak{Ab}$ of abelian groups. Morphisms in $G$-$\Mod_{\mathfrak F}$ and $\Mod_{\mathfrak F}$-$G$ are natural transformations of functors. Notice that if $\mathfrak F$ consists only on the trivial group then $G$-$\Mod_{\mathfrak F}$ and $\Mod_{\mathfrak F}$-$G$ are the usual categories of right and left $\Z G$-modules.

The category $\Mod_{\mathfrak F}$-$G$ is abelian \cite[page 8]{MislinValette}, and an object $P\in \Mod_{\mathfrak F}$-$G$ is called \emph{projective} if the functor
$$\mor(P,-)\colon \Mod_{\mathfrak F}\text{-}G\to \mathfrak{Ab}$$ is exact. Every $M\in \Mod_{\mathfrak F}$-$G$ admits a projective resolution and  projective resolutions are unique up to chain homotopy.

Let $M\in \Mod_{\mathfrak F}$-$G$ and $N\in G$-$\Mod_{\mathfrak F}$. By definition $M\otimes_{\mathfrak F} N$ is the abelian group $$\left(\sum_{G/K\in \mathfrak{D}_{\mathfrak F}(G)} M(G/K)\otimes_{\Z} N(G/K)\right)/\sim,$$
where $\sim$ is the equivalence relation generated by $$M(\phi)(m)\otimes n \sim m\otimes N(\phi) (n),$$ with $\phi\in \mor (G/K,G/L)$ and $m\in M(G/L), n\in N(G/K).$

Let now $\underline{\Z}$ denote the constant functor which assigns to each object the abelian group $\Z$. Then, for example
$$\underline{\Z}\otimes_{\mathfrak F}N=\sum_{G/K\in \mathfrak{D}_{\mathfrak F}(G)} N(G/K)/\sim$$
In this context, $n\sim m$ if and only if $n\in N(G/L)$ and $m= N(\phi)(n)$ for $\phi\in \mor (G/K,G/L).$ Hence $$\underline{\Z}\otimes_{\mathfrak F}N=\text{colim}_{G/K\in \mathfrak{D}_{\mathfrak F}(G)} N(G/K).$$

Now $\Tor_i(-,N)$ is defined as the $i$-th left derived functor of the {\it categorical tensor product functor }$-\otimes_{\mathfrak F}N\colon\Mod_{\mathfrak F}\text{-}G\to \mathfrak{Ab}$, and the {\it Bredon homology groups of $G$ with coefficients in $N$ }\index{Bredon homology} $\in G$-$\Mod_{\mathfrak F}$ are given by $$\Hop^{\mathfrak F}_i(G;N)\coloneqq \Tor_i(\underline{\Z},N), \; i\geq 0, $$
where $\underline{\Z}$ denotes the constant functor which assigns to each object the abelian group $\Z$.

For example, one has \begin{equation}\label{eq:H0bredon}
\Hop^{\mathfrak F}_0(G;N)=\underline{\Z}\otimes_{\mathfrak F}N=\text{colim}_{G/K\in \mathfrak{D}_{\mathfrak F}(G)} N(G/K).\end{equation}

Let $X$ be a $G$-$CW$-complex such that all the $G$-stabilizers of $X$ lie in $\mathfrak F$, and $N\in G$-$\Mod_{\mathfrak F}$;  then one defines {\it Bredon homology groups of $X$ with coefficients in $N$ } as $$\Hop^{\mathfrak F}_i(X;N)\coloneqq \Hop_i(\underline{C_*(X)}\otimes_{\mathfrak F}N), \; i\geq 0,$$
where $\underline{C_j(X)}\colon \mathfrak D_{\mathfrak F}(G)\to \mathfrak{Ab}$ is defined by $G/H\mapsto \Z[\Delta_j^H]$, being the latter the free abelian group on the $j$-dimensional cells of $X$ fixed by $H.$ It can be shown  (\cite[Section 3]{MislinValette}) that $\underline{C_*(E\mathfrak{ F})}\to \underline{\Z}$ is a projective resolution and hence the functors $\Hop^{\mathfrak F}_i(E\mathfrak F;-)$ and $\Hop^{\mathfrak F}_i(G;-)$ are equivalent.

\subsection{Bredon homology of groups in $\mathcal{G}_{cct}$}\label{Bredon}

From now on, we concentrate on the case $\mathfrak{F}=\mathfrak{Fin}(G)$ the class of finite subgroups of $G$. Let $R_{\C}$ denote the covariant functor $\mathfrak{D}_{\mathfrak F}(G)\to \mathfrak{Ab}$ which sends every left coset space $G/H$ to the underlying abelian group of the complex representation ring $R_{\C}(H)$. Recall that every $G$-map $f\colon G/H \to G/K,$ with $f(H)=gK$, gives rise to a group homomorphism $f'\colon H\to K,$ $h \mapsto g^{-1}hg$,  which is unique up to conjugation in $K.$ Since inner automorphism act trivially on the complex representation ring, the functor $R_{\C}$ sends the map $f$ to the homomorphism $R_{\C}(H)\rightarrow R_{\C}(K)$ induced by $f'$.

The main goal of this section is the description of the Bredon homology with coefficients in the complex representation ring $R_{\C}$ for a group $G$ in $\mathcal{G}_{cct}$, and the reason of the choice of the category of coefficients comes from its role in the context of Baum-Connes conjecture.

To undertake our problem, we first characterise the special shape of the orbit category $\mathfrak{D}_{\mathfrak F}(G)$ of these groups. So, in all the sequel $G$ will be a group and $\{G_\lambda\}_{\lambda\in \Lambda}$ a family of non-trivial cyclic subgroups of $G$ that satisfy (C).
For $\lambda \in \Lambda,$ let $\mathfrak{F}(\lambda)\coloneqq \{K : K\leqslant G_\lambda \},$ the set of subgroups of $G_\lambda.$

By (C), the set $\mathfrak{F}$ of finite subgroups of $G$ is the union up to conjugation of the sets $\mathfrak{F}(\lambda),$ that is $$\mathfrak{F}=\{ \rconj{g}{K_\lambda}: \lambda \in \Lambda, \{1\}\neq K_\lambda \leqslant G_\lambda, g\in G/G_\lambda \}\cup\{\{1\}\}.$$

For $\lambda,\mu\in \Lambda,$ $a\in G/G_\lambda$, $b\in G/G_\mu$ and $\{1\}\neq K_\lambda\leqslant G_\lambda,$ $\{1\}\neq L_\mu\leqslant G_\mu$, suppose that $\mor(G/ \rconj{a}{K_\lambda},G/\rconj{b}{L_\mu} )\neq \emptyset ;$
then by \eqref{eq:mor} there is some $g\in G$ such that $g\rconj{a}{K_\lambda}g^{-1}\leqslant \rconj{b}{L_\mu}$
 and the uniqueness in condition (C) implies that $\lambda=\mu.$ Moreover, since $G_\lambda$ is cyclic, we have $K\leqslant L$ by the structure of the subgroups of a cyclic group.

It is easy to show the converse, that is

\begin{equation}\label{eq:mor2}\mor(G/ \rconj{a}{K_\lambda},G/\rconj{b}{L_\mu} )\neq \emptyset \Leftrightarrow \lambda=\mu \text{ and } K_\lambda \leqslant L_\mu
\end{equation}
and for the case of the trivial group  we have
\begin{equation}\label{eq:mor3} \mor(G/\{1\},G/\rconj{b}{L_\mu})=G/\rconj{b}{L_\mu}\text{ and }\mor(G/\rconj{a}{K_\lambda},G/\{1\})=\emptyset
\end{equation}
Hence, the only non-trivial morphisms in this orbit category are inclusions.

Now we are ready to compute the 0-th Bredon homology groups we are interested in.  By \eqref{eq:H0bredon}   $$\Hop_0^{\mathfrak{F}}(G;R_\C)= \text{colim}_{G/K\in \mathfrak{D}_{\mathfrak{F}(G)}} R_\C(K).$$
Hence, if $\Lambda$ is non-empty we have
$$\Hop_0^{\mathfrak{F}}(G;R_\C)=\prod_{\lambda\in \Lambda}(\text{colim}_{K\in \mathfrak{F}(\lambda) }R_\C(K)).$$

By \eqref{eq:mor2} and \eqref{eq:mor3} all the subgroups in $\mathfrak{F}(\lambda)$ are, up to conjugation, cyclic subgroups of the cyclic $G_\lambda,$ and the morphisms are given by restriction of representations, we have
$$\Hop_0^{\mathfrak{F}}(G;R_\C) =\prod_{\lambda\in \Lambda}R_\C(G_\lambda).$$

In the case $\Lambda$ is empty, we have that $\Hop_0^{\mathfrak{F}}(G;R_\C)=R_\C(\{1\})=\Z.$

Now we deal with the higher homology groups. As seen before, a classifying space for proper actions $\uE G$ can be used to compute $\Hop_*^{\mathfrak{F}}(G;R_\C).$ For example, it is known that the virtually free groups are exactly the groups which admit a tree as a model of $\uE G$, and their Bredon homology with coefficients in the complex representation ring has been described by Mislin in \cite[Theorem 3.17]{MislinValette}.

We denote  by $\textrm{\underline{B}}G$ the orbit space $(\uE G)/G$, sometimes called ``classifying space for $G$-proper bundles" (see \cite[Appendix 3]{BaumConnes} for the motivation of the name). Note that an $n$-dimensional model for $\uE G$ produces $n$-dimensional models for  its orbit space. Moreover, we define $\dim(\uE G)^{\text{sing}}$ to be the minimum of $\dim(X^{\text{sing}})$ where $X$ is a model for $\uE G$.

The following result relates the Bredon homology of $\uE G$ and the ordinary homology of $\textrm{\underline{B}}G$:

\begin{Lem}[{\cite[Lemma  3.21]{MislinValette}}]\label{L:mislim1}
Let $G$ be an arbitrary group. Then there is a natural map
$$\Hop_i^{\mathfrak {Fin}}(\uE G; R_\C)\to \Hop_i(\emph{\underline{B}}G;\Z) $$
which is an isomorphism in dimension $i>\dim (\uE G)^{\text{sing}}+1$ and injective in dimension $i=\dim (\uE G)^{\text{sing}}+1.$\qed
\end{Lem}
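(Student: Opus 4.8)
The plan is to construct the natural comparison map and analyze it via the coefficient system $R_\C$ restricted to the orbit category. The key observation is that Bredon homology with coefficients in $R_\C$ and ordinary homology of $\underline{B}G=(\uE G)/G$ both arise from chain complexes built on the cells of $\uE G$, and they differ only in the ``weight'' attached to each orbit of cells: in the ordinary homology of the quotient each orbit $G/H$ contributes a single copy of $\Z$, whereas in Bredon homology it contributes $R_\C(H)$. First I would recall that there is a canonical augmentation $R_\C(H)\to\Z$ sending a representation to its (virtual) dimension, which is natural with respect to the induced maps $R_\C(H)\to R_\C(K)$ coming from morphisms in $\mathfrak D_{\mathfrak F}(G)$, since restriction of representations preserves dimension. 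This dimension map assembles into a natural transformation of coefficient systems $R_\C\Rightarrow\underline\Z$, and hence induces a map on $\otimes_{\mathfrak F}$ and on the associated homology, giving the desired $\Hop_i^{\mathfrak{Fin}}(\uE G;R_\C)\to\Hop_i(\underline{B}G;\Z)$.

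Next I would make the comparison cellular. Fix a model $X$ for $\uE G$ realizing $\dim(X^{\mathrm{sing}})=\dim(\uE G)^{\mathrm{sing}}=:d$. The Bredon chain complex is $\underline{C_*(X)}\otimes_{\mathfrak F}R_\C$, and the ordinary chain complex of the quotient is $\underline{C_*(X)}\otimes_{\mathfrak F}\underline\Z$, so the dimension augmentation induces a chain map between them. In dimension $j$, the term splits over the $G$-orbits of $j$-cells according to their stabilizers $H$: each orbit contributes $R_\C(H)$ on the Bredon side and $\Z$ on the quotient side, with the map being the dimension homomorphism $R_\C(H)\to\Z$. The crucial point is that for a free orbit ($H=\{1\}$) this map $R_\C(\{1\})=\Z\to\Z$ is the identity, hence an isomorphism; it fails to be an isomorphism only where $H\neq\{1\}$, i.e. precisely on cells lying in $X^{\mathrm{sing}}$, which occur only in dimensions $\le d$.

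The heart of the argument is then a comparison of the two chain complexes in the range where they agree. Since the chain map is an isomorphism in every degree $j>d$ and the kernel and cokernel of the dimension map $R_\C(H)\to\Z$ are concentrated in degrees $\le d$, the mapping cone (equivalently, the relative complex measuring the discrepancy) is supported in degrees $\le d$. A standard long-exact-sequence argument on the cone shows that the induced map on homology is an isomorphism in degrees $i>d+1$ and injective in degree $i=d+1$: the obstruction to surjectivity in degree $d+1$ lives in $H_{d}$ of the discrepancy complex, and the obstruction to injectivity in degree $d+1$ would live in $H_{d+1}$ of the discrepancy complex, which vanishes because that complex is concentrated in degrees $\le d$. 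I expect the main obstacle to be the careful bookkeeping showing that the discrepancy (kernel/cokernel of the dimension augmentation, summed over singular cells) really is confined to degrees $\le d$ and therefore cannot interfere with $H_i$ for $i>d+1$, together with verifying that the colimit/tensor structure of $\otimes_{\mathfrak F}$ is compatible with this degreewise splitting; once that confinement is established, the shift by one in the isomorphism/injectivity ranges is a formal consequence of the long exact sequence of the cone.
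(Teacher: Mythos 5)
The paper does not prove this lemma at all: it is quoted verbatim from Mislin (\cite[Lemma 3.21]{MislinValette}) and the \qed\ marks a citation, not an argument. Your overall strategy --- compare the Bredon chain complex $\underline{C_*(X)}\otimes_{\mathfrak F}R_\C$ of a model $X$ with $\dim X^{\mathrm{sing}}=d$ against the cellular complex of $X/G$, observe that the two agree on free orbits, note that non-free cells live only in degrees $\leq d$, and run the long exact sequence of the resulting short exact sequence of chain complexes --- is exactly the standard proof of Mislin's lemma, and the degree bookkeeping at the end (isomorphism for $i>d+1$, injectivity for $i=d+1$, with $\Hop_d$ of the kernel complex as the obstruction to surjectivity in degree $d+1$) is correct.

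There is, however, one genuine error in your construction of the comparison map. You define the natural transformation $R_\C\Rightarrow\underline{\Z}$ by the virtual dimension $V\mapsto\dim_\C V$ and justify naturality by saying that ``restriction of representations preserves dimension.'' But the coefficient system $R_\C$ used here (and in Mislin) is the \emph{covariant} functor on $\mathfrak D_{\mathfrak{Fin}}(G)$ whose structure maps $R_\C(H)\to R_\C(K)$, for $H$ subconjugate to $K$, are given by \emph{induction} of representations, not restriction --- this is forced by the direction of the arrows, and it is what makes the computation $\Hop_0^{\mathfrak F}(G;R_\C)=\prod_\lambda R_\C(G_\lambda)$ in Section 4 come out as a colimit attaining its value at the maximal subgroups. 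Since $\dim_\C\mathrm{Ind}_H^K V=[K:H]\dim_\C V$, the dimension map does not commute with the structure maps, so it is not a natural transformation and does not induce a chain map; the squares comparing the two differentials fail to commute wherever a cell with nontrivial stabilizer appears in the boundary of another cell. The fix is to use instead $\phi_H(V)=\dim_\C V^H=\langle V,\mathbf 1_H\rangle$, the multiplicity of the trivial representation, which is natural with respect to induction by Frobenius reciprocity, is still the identity on $R_\C(\{1\})=\Z$, and is still surjective onto $\Z$ for every finite $H$. With that replacement your kernel complex is again concentrated in degrees $\leq d$ and the rest of your argument goes through verbatim. A minor point of phrasing: the mapping cone of this chain map is not literally ``supported in degrees $\leq d$'' (its degree-$n$ term contains $C_{n-1}\oplus D_n$); what is supported there is the kernel of the degreewise surjective augmentation, which is the complex your obstruction analysis actually uses, so you should phrase the last step via the short exact sequence of chain complexes rather than via the cone.
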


Now we are ready to state the main result of this section.

\begin{Thm}\label{T:clabredonhom}
Let $G$ be a group on the class $\mathcal G_{cct}$ and let $\{G_\lambda\}_{\lambda\in \Lambda}$ be the subgroups for which condition (C) holds.
Then
\begin{enumerate}[\normalfont(i).]
\item $\Hop_i^{\mathfrak{F}}(G;R_\C)=\Hop_i(\emph{\underline{B}}G; \Z)$ for $i\geq 2.$

\item $\Hop_1^{\mathfrak{F}}(G;R_\C)=(G/\Tor(G))_{\text{ab}} $ where $\Tor(G)$ denotes the subgroup of $G$ generated by the torsion elements.

\item $\Hop_0^{\mathfrak{F}}(G;R_\C)=\prod_{\lambda \in \Lambda }R_\C (G_\lambda),$ if $\Lambda\neq \emptyset$ or  $\Hop_0^{\mathfrak{Fin}}(G;R_\C)=\Z,$ if $\Lambda=\emptyset.$

\end{enumerate}
\end{Thm}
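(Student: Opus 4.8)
The plan is to handle the three items separately, observing first that (iii) has effectively already been proved: the colimit computation made just before the statement, together with the formula \eqref{eq:H0bredon} for $\Hop_0$, gives $\Hop_0^{\mathfrak F}(G;R_\C)=\prod_{\lambda\in\Lambda}R_\C(G_\lambda)$ when $\Lambda\neq\emptyset$ and $R_\C(\{1\})=\Z$ when $\Lambda=\emptyset$. For (i) I would combine three inputs: the equivalence $\Hop_i^{\mathfrak F}(G;-)\cong\Hop_i^{\mathfrak F}(\uE G;-)$ recalled in the background (valid because $\underline{C_*(\uE G)}\to\underline{\Z}$ is a projective resolution), the existence by Proposition \ref{bar-construction} of a model with $\dim(\uE G)^{\mathrm{sing}}=0$, and Lemma \ref{L:mislim1}. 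Since the comparison map of Lemma \ref{L:mislim1} is an isomorphism in degrees $i>\dim(\uE G)^{\mathrm{sing}}+1=1$, this immediately yields $\Hop_i^{\mathfrak F}(G;R_\C)\cong\Hop_i(\underline{B}G;\Z)$ for all $i\geq 2$.

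The heart of the matter is (ii), where Lemma \ref{L:mislim1} only gives injectivity at $i=1$ and must be sharpened to an isomorphism by a chain-level comparison. I would fix a model $X$ for $\uE G$ with $\dim X^{\mathrm{sing}}=0$. By the $G$-CW convention every cell is fixed pointwise by its stabilizer, so $\dim X^{\mathrm{sing}}=0$ forces every cell of dimension $\geq 1$ to be free, while each $0$-cell is either free or has a finite (cyclic, by (C)) stabilizer. Hence the Bredon complex $\underline{C_*(X)}\otimes_{\mathfrak F}R_\C$ and the cellular complex $C_*(X/G)=C_*(\underline{B}G)$ have identical chain groups in every degree $\geq 1$ and identical boundary $\partial_2$; they differ only in degree $0$, where $R_\C(K)$ replaces $\Z$ at each singular vertex of stabilizer $K$. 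I would then write down the natural map $\rho\colon C_0(\underline{B}G)\to\underline{C_0(X)}\otimes_{\mathfrak F}R_\C$ that is the identity on free vertices and sends the generator at a singular vertex with stabilizer $K$ to the class $[\C K]$ of the regular representation. Because $[\C K]=\mathrm{Ind}_{\{1\}}^{K}(1)$ is exactly the image of the functorial induction map attached to $\{1\}\leqslant K$, one checks on generators that $\partial_1^{\mathfrak F}=\rho\circ\partial_1^{\mathrm{orb}}$. Since $\rho$ is injective (the regular representation is a nonzero element of the free abelian group $R_\C(K)$) and $\partial_2$ is unchanged, we get $\ker\partial_1^{\mathfrak F}=\ker\partial_1^{\mathrm{orb}}$ and $\mathrm{im}\,\partial_2^{\mathfrak F}=\mathrm{im}\,\partial_2^{\mathrm{orb}}$, whence $\Hop_1^{\mathfrak F}(\uE G;R_\C)\cong\Hop_1(\underline{B}G;\Z)$.

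It then remains to identify $\Hop_1(\underline{B}G;\Z)$ group-theoretically. Since $\uE G$ is simply connected and the $G$-action is proper, I would invoke the theorem of M.~A.~Armstrong on fundamental groups of orbit spaces, which identifies $\pi_1(\underline{B}G)=\pi_1(\uE G/G)$ with the quotient of $G$ by the normal subgroup generated by all elements having a fixed point. Every finite subgroup of $G$ fixes a point of $\uE G$ (its fixed-point set is contractible, hence nonempty), and conversely every element fixing a point generates a finite subgroup, so this subgroup is precisely $\Tor(G)$, which is normal as it is generated by a conjugation-invariant set. Abelianizing gives $\Hop_1(\underline{B}G;\Z)=\pi_1(\underline{B}G)_{\mathrm{ab}}=(G/\Tor(G))_{\mathrm{ab}}$, which together with the previous paragraph establishes (ii).

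The purely routine portions are the agreement of the two complexes in degrees $\geq 1$ and the verification that $\partial_1^{\mathfrak F}=\rho\circ\partial_1^{\mathrm{orb}}$ on generators. The two substantive points, and the ones I expect to demand the most care, are the upgrade of Lemma \ref{L:mislim1} from injectivity to an isomorphism at $i=1$, which rests on the injectivity of the regular-representation map $\rho$, and the clean identification $\pi_1(\underline{B}G)\cong G/\Tor(G)$ via Armstrong's theorem.
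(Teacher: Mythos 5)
Your proposal is correct and follows essentially the same route as the paper: (i) and (iii) are dispatched exactly as in the text, and for (ii) you compare the Bredon chain complex of a model with zero-dimensional singular part with the cellular complex of $\underline{\textrm{B}}G$ (which agree in degrees $\geq 1$) and then identify $\pi_1(\underline{\textrm{B}}G)$ with $G/\Tor(G)$ via Armstrong's theorem, just as the paper does via \cite{LearyNucinkis}. The only (harmless) difference is the direction of the degree-zero comparison map: the paper uses the surjection $\pi=(\mathop{Id},\epsilon)$ built from the augmentation $R_\C(K)\to\Z$ and reads off $\ker\underline{d_1}=\ker d_1$ from the resulting diagram, whereas you use the injection $\Z\to R_\C(K)$, $1\mapsto[\C K]$, which gives the same equality of kernels slightly more directly.
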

\begin{proof}
(i) follows from Lemma \ref{L:mislim1}, since by Proposition \ref{bar-construction}  $\dim (\uE G)^{\text{sing}}=0.$
(iii) has been proved at the beginning of this section, so it only remains to prove (ii).

By Proposition \ref{bar-construction}, we have a model $X$ of $\uE G$ such that $\dim(\uE G)^{\text{sing}}=0.$  Moreover, the vertex set of $X$ is in bijection with  $G\sqcup\{g G_\lambda :\lambda \in \Lambda, g\in G\}$. Hence, we have the following exact augmented cellular chain complex of $\Z G$-modules
$$\dots \to \oplus_{J_2} \Z G \to \oplus_{J_1} \Z G\to  \Z G \oplus (\oplus_{\lambda\in \Lambda}\Z [G/G_\lambda]) \to  \Z \to  0$$
where $J_i$ is the number of free $G$-orbits of $i$-dimensional cells in $X.$

Also using the cellular structure of $X,$ we have the following projective resolution of $\underline{\Z}$ in $\Mod_{\mathfrak{Fin}}$-$G$
$$\dots \to \underline{C_2(X)} \to \underline{C_1(X)} \to  \underline{C_0(X)} \to  \underline{\Z} \to  0$$
where
$$\underline{C_i(X)}\colon \mathfrak{D}_{\mathfrak{Fin}}(G)\to \mathfrak{Ab}, \quad \underline{C_i(X)}(G/H)=\begin{cases} 0 &\text{ if }H\neq\{1\}\\\Z G^{J_i}& \text{ if } H=1\end{cases}$$
for $i=1,2,\dots.$ For $i=0$ we have, $\underline{C_0(X)}\colon \mathfrak{D}_{\mathfrak{Fin}}(G)\to \mathfrak{Ab},$ $\underline{C_0(X)}(G/H)=\Z [G/G_\lambda]$ if $H\neq 1$ where $G_\lambda$ is the unique subgroup that contains a conjugate of $H,$  and $$\underline{C_0(X)}(G/\{1\})=\Z G \oplus (\oplus_{\lambda\in \Lambda}\Z [G/G_\lambda]).$$

Let us write $\underline{C_i}$ for $\underline{C_i(X)}\otimes_{\mathfrak{F}}R_\C$ and $C_*$ for the ordinary cellular chain complex $C_*(X/G).$ Then we have:

$$\underline{C_0}\cong\Z \oplus\left( \oplus_{\lambda\in \Lambda} R_C(G_\lambda)\right),\qquad C_0\cong\Z \oplus\left( \oplus_{\lambda\in \Lambda} \Z \right).$$

We have the diagram in Figure~\ref{fig:diag},

\begin{figure}[ht]
\centerline{
\xymatrix{
 & & \ker(\pi)\ar[d]^{\text{mono}} \ar[rr]^{\hspace{-0.7cm}\cong}&& \ker(\epsilon)=\prod_{\lambda\in \Lambda}\tilde{R}_{\C}(G_\lambda)\ar[d]^{\text{mono}}\\
\underline{C_2}\ar[r]^{\underline{d_2}}\ar[d]^{=} & \underline{C_1}\ar[r]^{\underline{d_1}}\ar[d]^{=} & \underline{C_0}\ar[rr]^{\hspace{-0.7cm}\text{epi}}\ar[d]^{\pi} && \prod_{\lambda\in \Lambda} R_\C(G_\lambda)\ar[d]^{\epsilon}\\
 C_2\ar[r]^{d_2} & C_1\ar[r]^{d_1} & C_0\ar[rr]^{\hspace{-0.7cm}\text{epi}} & & \prod_{\lambda\in \Lambda} \Z}
}\label{fig:diag}\caption{Comparing the two chain complexes}
\end{figure}
\noindent where $\pi=(\mathop{Id},\epsilon)$ and $\epsilon\colon \prod R_\C(G_\lambda)\to \prod \Z$ is the augmentation. Now we are ready to check the statements of the theorem.

By \cite[Proposition 3]{LearyNucinkis} (see also \cite{Armstrong}), $\pi_1(\underline{\textrm{B}}G)= G/\Tor(G)$ . Hence, $\Hop_1(\underline{\textrm{B}}G;\Z)\cong (G/\Tor (G))_{\text{ab}}$. By Lemma \ref{L:mislim1}, the map $\Hop_1^{\mathfrak {F}}(\uE G; R_\C)\to \Hop_1(\underline{\textrm{B}}G;\Z) $ is injective. We have to show that it is surjective too.

The diagram shows that $\ker \underline{d_1}=\ker d_1,$ thus $\ker\underline{d_1}/ \mathop{\text{Im}} \underline{d_2}\to \ker{d_1}/ \mathop{\text{Im}} {d_2}$ is onto.
\end{proof}

\begin{Rem}

The model for $\uE G$ of \cite[4.11]{Luck04} is given by a $G$-pushout which involve $EG$ and $EG_{\lambda}$ for $\lambda \in \Lambda$, so a Mayer-Vietoris argument may give some extra information about the higher homology groups $\Hop_i(\textrm{\underline{B}}G; \Z)$ appearing in Theorem \ref{T:clabredonhom} (i).
\end{Rem}

For the concrete families described above, we may give sharper statements:

\begin{Cor}\label{C:libredonhom}
Let $A$ and $B$ be locally indicable groups and $r\in A*B$ not conjugate to an element of $A$  nor of $B$.

Let $G\coloneqq (A*B)/\gen{\rconj{A*B}{r}}$ and  let $G_r$ be the cyclic subgroup of $G$ generated by the image of $\sqrt[A*B]{r}.$

Then
\begin{enumerate}[\normalfont(i).]
\item $\Hop_i^{\mathfrak{F}}(G;R_\C)=\Hop_i(A;\Z)\oplus \Hop_i(B;\Z)$ for $i>2.$

\item $\Hop_2^{\mathfrak{F}}(G;R_\C)=\Hop_2(\emph{\underline{B}}G;\Z)$.

\item $\Hop_1^{\mathfrak{F}}(G;R_\C)=(G/\Tor(G))_{\text{ab}} $ where $\Tor(G)$ denotes the subgroup of $G$ generated by the torsion elements.

\item $\Hop_0^{\mathfrak{F}}(G;R_\C)=R_\C (G_r)$ if $G_r \neq \{1\}$ or  $\Hop_0^{\mathfrak{Fin}}(G;R_\C)=\Z$ if $G_r=\{1\}.$
\end{enumerate}
\end{Cor}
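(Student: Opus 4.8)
The plan is to read off parts (ii)--(iv) as special cases of Theorem \ref{T:clabredonhom} and to devote the real work to part (i). By the construction in Subsection \ref{ss:oreli}, the group $G$ belongs to $\mathcal{G}_{cct}$, with the single cyclic subgroup $G_r$ playing the role of the family $\{G_\lambda\}_{\lambda\in\Lambda}$; concretely $\Lambda=\{r\}$ when $G_r\neq\{1\}$, and $\Lambda=\emptyset$ (so that $G$ is torsion-free) when $G_r=\{1\}$. Under this identification, part (iii) is Theorem \ref{T:clabredonhom}(ii), part (iv) is Theorem \ref{T:clabredonhom}(iii) for a one-element family, and part (ii) is the case $i=2$ of Theorem \ref{T:clabredonhom}(i). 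Since that same statement gives $\Hop_i^{\mathfrak{F}}(G;R_\C)\cong\Hop_i(\underline{B}G;\Z)$ for all $i\geq 2$, proving (i) amounts to computing $\Hop_i(\underline{B}G;\Z)$ for $i>2$.

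For this I would work with the explicit $\uE G$ model $\mathcal{C}''_G$ of Subsection \ref{ss:oreli} and its cellular $\Z G$-chain complex $(C,d)$. Since $\underline{B}G=(\uE G)/G$, its integral cellular chain complex is $\Z\otimes_{\Z G}(C,d)$: the orbit $G/G_r$ of $0$-cells contributes a single vertex because $\Z\otimes_{\Z G}\Z[G/G_r]\cong\Z$ (there is no orientation to reverse on a vertex), and every positive-dimensional cell lies in a free $G$-orbit. For $i\geq 3$ the module $C_i$ is $(\Z G\otimes_{\Z A}C_i^A)\oplus(\Z G\otimes_{\Z B}C_i^B)$, so by associativity of the tensor product
\begin{equation*}
\Z\otimes_{\Z G}C_i\cong(\Z\otimes_{\Z A}C_i^A)\oplus(\Z\otimes_{\Z B}C_i^B)\cong C_i(\ol{\mathcal{C}}_A)\oplus C_i(\ol{\mathcal{C}}_B),
\end{equation*}
and the differentials $d_k=(1\otimes d_k^A)\oplus(1\otimes d_k^B)$ for $k\geq 3$ descend to $d_k^A\oplus d_k^B$, the cellular differentials of the $K(A,1)$ and $K(B,1)$ complexes $\ol{\mathcal{C}}_A$ and $\ol{\mathcal{C}}_B$.

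The decisive point is that the remaining cells of $\mathcal{C}''_G$ --- the connecting edge $e$, the subdivision edges $f$, the vertices $G/G_r$, and the subdivision $2$-cells $\beta$ --- all lie in degrees $\leq 2$, and the boundary out of degree $3$ lands only in the $A$- and $B$-summands of $C_2$. Hence for every $i>2$ the computation of $\Hop_i(\underline{B}G;\Z)$ takes place entirely within the direct-sum subcomplex $C_*(\ol{\mathcal{C}}_A)\oplus C_*(\ol{\mathcal{C}}_B)$, giving
\begin{equation*}
\Hop_i(\underline{B}G;\Z)\cong\Hop_i(\ol{\mathcal{C}}_A;\Z)\oplus\Hop_i(\ol{\mathcal{C}}_B;\Z)\cong\Hop_i(A;\Z)\oplus\Hop_i(B;\Z),
\end{equation*}
the last isomorphism because $\ol{\mathcal{C}}_A$ and $\ol{\mathcal{C}}_B$ are Eilenberg--MacLane spaces. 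Combined with Theorem \ref{T:clabredonhom}(i), this establishes part (i).

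I expect the only genuine obstacle to be the low-degree bookkeeping: one must confirm that the subdivision $2$-cell $\beta$ and the edges $e,f$ truly do not contribute to $\Hop_i$ for $i>2$, i.e.\ that the identification of chain complexes is valid from degree $3$ upward. This is exactly why degree $2$ is exceptional and is left in the unreduced form $\Hop_2(\underline{B}G;\Z)$ in (ii): there the $2$-cell $\beta$ and the push-out gluing of $\ol{\mathcal{C}}_{A*B}$ with $\ol{\mathcal{C}}_{G_r}$ genuinely interact with $H_2$, so no clean splitting into the homologies of $A$ and $B$ is available in general.
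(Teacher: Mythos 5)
Your proposal is correct and follows essentially the same route as the paper: parts (ii)--(iv) are read off from Theorem \ref{T:clabredonhom} once one knows from Subsection \ref{ss:oreli} that $G$ and $G_r$ satisfy condition (C), and part (i) is extracted from the cellular chain complex of $\mathcal{C}''_G$, whose cells in degrees $\geq 3$ lie entirely in the induced summands $(\Z G\otimes_{\Z A}C_i^A)\oplus(\Z G\otimes_{\Z B}C_i^B)$. Your explicit check that the extra cells $e$, $f$, $\mathbf{u}$, $\beta$ live in degrees $\leq 2$ and that $d_3$ lands in the $A$- and $B$-summands is exactly the bookkeeping the paper leaves implicit in the phrase ``follows from the cellular chain complex of $\mathcal{C}''_G$''.
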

\begin{proof}
By the discussion in the subsection \ref{ss:oreli}, $G$ and $G_r$ are under the hypotheses of Theorem \ref{T:clabredonhom}, and (ii),(iii) and (iv) follow directly. Statement (i) follows from the cellular chain complex of $\mathcal{C}''_G$ described in \ref{ss:oreli}.
\end{proof}

\begin{Cor}\label{C:clabredonhom}
Let $F$ be a free group and $R$ be subset of $F$.
Let $G\coloneqq F/\gen{\rconj{F}{R}}$ and  for each $r\in R,$ let $G_r$ be the cyclic subgroup of $G$ generated by the image of $\sqrt[F]{r}.$ Suppose that $\gen{\lconj{F}{R}}_\text{ab}\cong \oplus_{r\in R} \Z[G/G_r]$, and let $T(R)=\{r\in R:G_r\neq 1\}$. Then
\begin{enumerate}[\normalfont(i).]
\item $\Hop_i^{\mathfrak{F}}(G;R_\C)=0$ for $i>2.$

\item $\Hop_2^{\mathfrak{F}}(G;R_\C)=\Hop_2(G;\Z).$

\item $\Hop_1^{\mathfrak{F}}(G;R_\C)=(G/\Tor(G))_{\text{ab}} $ where $\Tor(G)$ denotes the subgroup of $G$ generated by the torsion elements.

\item $\Hop_0^{\mathfrak{F}}(G;R_\C)=\prod_{r\in T(R)}R_\C (G_r)$ if $T(R)\neq \emptyset$ or  $\Hop_0^{\mathfrak{Fin}}(G;R_\C)=\Z$ if $T(R)=\emptyset.$

\end{enumerate}
\end{Cor}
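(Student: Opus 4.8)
The plan is to read Corollary~\ref{C:clabredonhom} as the specialization of Theorem~\ref{T:clabredonhom} to the groups of \S\ref{ss:aspherical}, sharpening parts (i) and (ii) by means of the explicit two-dimensional model $\mathcal{C}''$ constructed there. First I would record that $G=F/\gen{\rconj{F}{R}}$, together with the family $\{G_r\}_{r\in T(R)}$, satisfies condition (C): this is precisely what \S\ref{ss:aspherical} deduces from the hypothesis $\gen{\lconj{F}{R}}_{\mathrm{ab}}\cong\bigoplus_{r\in R}\Z[G/G_r]$ through Theorem~\ref{Thm:Howie}. Thus Theorem~\ref{T:clabredonhom} applies with $\Lambda=T(R)$, and statements (iii) and (iv) are immediate: (iii) is Theorem~\ref{T:clabredonhom}(ii) verbatim, while (iv) is Theorem~\ref{T:clabredonhom}(iii), the relevant cyclic subgroups being exactly the $G_r$ with $r\in T(R)$.

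Next I would invoke Theorem~\ref{T:clabredonhom}(i) for the model $\mathcal{C}''$, giving $\Hop_i^{\mathfrak{F}}(G;R_\C)\cong\Hop_i(\underline{B}G;\Z)$ for $i\ge 2$, where $\underline{B}G=\mathcal{C}''/G$. As $\mathcal{C}''$ is two-dimensional, so is its orbit space, whence $\Hop_i(\underline{B}G;\Z)=0$ for $i>2$; this is (i). There remains the crux, namely to identify $\Hop_2(\underline{B}G;\Z)=\Hop_2^{\mathfrak{F}}(G;R_\C)$ with the ordinary group homology $\Hop_2(G;\Z)$.

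For this I would use the exact sequence \eqref{eq:seqaspherical}, which is the augmented cellular chain complex of the contractible complex $\mathcal{C}''$ and hence resolves $\Z$ by $\Z G$-modules that are free apart from the permutation summands $\Z[G/G_r]$ sitting in degree zero. Let $K$ be the kernel of the augmentation; its two left-hand terms form a length-one free resolution $0\to\Z G^{|R|}\xrightarrow{\delta_2}\Z G^{|R|+|X|}\to K\to 0$, so $K$ has projective dimension at most one and $\Tor_1^{\Z G}(\Z,K)=\ker(\Z\otimes_{\Z G}\delta_2)$, which is $\Hop_2(\underline{B}G;\Z)$ read off the coinvariant complex of \eqref{eq:seqaspherical}. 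Splicing the short exact sequence $0\to K\to \Z G\oplus\bigoplus_{r}\Z[G/G_r]\to\Z\to 0$ into the long exact $\Tor_*^{\Z G}(\Z,-)$ sequence, and using Shapiro's lemma $\Tor_i^{\Z G}(\Z,\Z[G/G_r])\cong\Hop_i(G_r;\Z)$ together with $\Hop_2(G_r;\Z)=0$ and $\Hop_1(G_r;\Z)=\Z/|G_r|$ for the finite cyclic $G_r$, I obtain an exact sequence $$0\to\Hop_2(G;\Z)\to\Hop_2(\underline{B}G;\Z)\to\bigoplus_{r\in T(R)}\Z/|G_r|.$$ Since $\Hop_2(\underline{B}G;\Z)$ is free abelian (a subgroup of the free abelian group of cellular $2$-chains of $\underline{B}G$) and the right-hand group is finite, $\Hop_2(G;\Z)$ is a finite-index subgroup of it, hence free abelian of the same rank and so isomorphic to $\Hop_2^{\mathfrak{F}}(G;R_\C)$, which is (ii).

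The hard part is exactly this last identification. The comparison map $\Hop_2(G;\Z)\to\Hop_2(\underline{B}G;\Z)$ is in general not onto: its cokernel is governed by the image of $\bigoplus_{r\in T(R)}\Hop_1(G_r;\Z)\to\Hop_1(G;\Z)$, that is, by the failure of the finite cyclic subgroups $G_r$ to inject into $G_{\mathrm{ab}}$, and this is precisely the correction incurred in passing from the $\uE G$-resolution \eqref{eq:seqaspherical} (whose degree-zero term carries the non-free permutation modules encoding the torsion) to an honest free resolution of $\Z$. The point to secure is that these corrections $\Z/|G_r|$ are finite in number and finite in order, so that the stated equality in (ii) holds as an isomorphism of abstract groups even though the natural map is only an isomorphism up to finite index.
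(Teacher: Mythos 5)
Your proposal is correct, and for parts (i), (iii) and (iv) it coincides with the paper's proof: condition (C) holds via Theorem \ref{Thm:Howie}, Theorem \ref{T:clabredonhom} gives (iii) and (iv), and the two-dimensionality of $\mathcal{C}''$ gives (i). For part (ii), however, you take a genuinely different route. The paper transfers the computation from $\mathcal{C}''$ to $\mathcal{C}'$, whose chain complex is \eqref{eq:seq}, and reads $\Hop_2(\underline{\mathrm{B}}G;\Z)$ off the coinvariants of \eqref{eq:seq} as the kernel of $\bigl(\oplus_{r}\Z[G/G_r]\to\Z G^{|X|}\bigr)\otimes_{\Z G}\Z$, which is $\Hop_2(G;\Z)$ by the five-term exact sequence (Hopf's formula) for $1\to\gen{\lconj{F}{R}}\to F\to G\to 1$. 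You instead stay with \eqref{eq:seqaspherical}, dimension-shift to write $\Hop_2(\underline{\mathrm{B}}G;\Z)=\Tor_1^{\Z G}(\Z,K)$, and compare with $\Tor_2^{\Z G}(\Z,\Z)=\Hop_2(G;\Z)$ via the long exact sequence and Shapiro's lemma, obtaining the exact sequence $0\to\Hop_2(G;\Z)\to\Hop_2(\underline{\mathrm{B}}G;\Z)\to\oplus_{r\in T(R)}\Z/|G_r|$ and concluding by a rank count. Your route buys something real: it makes explicit that the natural comparison map is in general only an isomorphism onto a finite-index subgroup, with cokernel $\ker\bigl(\oplus_r\Hop_1(G_r;\Z)\to\Hop_1(G;\Z)\bigr)$, so that the equality in (ii) is an abstract isomorphism of free abelian groups of equal rank rather than the bijectivity of an induced map --- a point the paper's appeal to the homotopy equivalence between $\mathcal{C}'$ and $\mathcal{C}''$ leaves implicit, since $\mathcal{C}'$ is not a $G$-CW-complex and the degree-two kernels of the coinvariants of \eqref{eq:seq} and of \eqref{eq:seqaspherical} are in general distinct (though isomorphic) subgroups of $\Z^{|R|}$. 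The only points you should make explicit are that $T(R)$ is finite, which is needed for the cokernel to be finite and is forced by condition (C), and that a finite-index subgroup of a free abelian group is free abelian of the same rank even when that rank is infinite.
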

\begin{proof}
By the discussion in the subsection \ref{ss:aspherical}, $G$ and $\{G_r\}_{r\in T(R)}$ are again under the hypothesis of Theorem \ref{T:clabredonhom} and we have a 2-dimensional model for $\uE G$ so (i),(iii) and (iv) follow.
It only remains to prove the statement (ii). From the theorem we know that $\Hop_2^{\mathfrak{F}}(G;R_\C)=\Hop_2(\underline{B} G;\Z),$ which, by  the cellular chain complex \eqref{eq:seqaspherical} of $\mathcal{C}''$ described in \ref{ss:aspherical}, is isomorphic to the kernel of the induced map $(\Z G^{|R|}\to \Z G^{|R|+|X|} )\otimes_{\Z [G]} \Z.$

The complex $\mathcal{C}'$ described in \ref{ss:aspherical} has cellular chain complex \eqref{eq:seq} and is homotopic to $\mathcal{C}'',$ hence $\Hop_2(\underline{B} G;\Z)$ is isomorphic to the kernel of the induced map  $(\oplus_{r\in R}\Z [G/G_{r}]\to \Z G^{|X|} )\otimes_{\Z [G]} \Z,$ which is isomorphic to $\Hop_2(G,\Z).$\end{proof}

In the case of Hempel groups, we can be even more precise:

\begin{Thm} \label{Hempel:bredon}
Let $G$ be a Hempel group with presentation $\prs{x_{1},\dots,x_k}{w,r}$, with $k\geq 3,$  $w\in[x_1,x_2]\gen{x_{3},\dots,x_k}\subseteq \gp{x_{1},\dots,x_k }{\quad}=F$ and such that $r$ is a Hempel relator for $\prs{x_{1},\dots,x_{k}}{w}.$ Let  $G_r\coloneqq \mathbf{C}_F(r)/\gen{r}=\gp{\sqrt[F]{r}}{r}.$
Then
\begin{enumerate}[\normalfont(i).]
\item $\Hop_i^{\mathfrak{Fin}}(G;R_\C)=0$ for $i>2.$
\item $\Hop_2^{\mathfrak{Fin}}(G;R_\C)=\Hop_2(G;\Z)=(\gen{\rconj{F}{r}\cup\rconj{F}{w}}\cap [F,F])/[F,\gen{\rconj{F}{r}\cup\rconj{F}{w}}]$
\item $\Hop_1^{\mathfrak{Fin}}(G;R_\C)=(\gp{x_{1},\dots,x_k}{w,\sqrt[F]{r}})_{\text{ab}}.$
\item $\Hop_0^{\mathfrak{Fin}}(G;R_\C)=R_\C (G_r)$ if $G_r\neq \{1\}$ and $\Z$ in the other case.
\end{enumerate}
\end{Thm}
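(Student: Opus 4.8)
The plan is to verify that a Hempel group $G$ satisfies all the hypotheses needed to invoke Theorem~\ref{T:clabredonhom} and Corollary~\ref{C:clabredonhom}, and then to make each of the four resulting formulas completely explicit in terms of the given presentation $\prs{x_1,\dots,x_k}{w,r}$. By Theorem~\ref{Hempel}, the set $R=\{w,r\}$ is Cohen-Lyndon aspherical in $F=\gp{x_1,\dots,x_k}{\;}$, so by Definition~\ref{D:cla}(\ref{it:relationmodule}) we have $\gen{\lconj{F}{R}}_{\text{ab}}\cong\Z[G/G_w]\oplus\Z[G/G_r]$. Since $w\in[x_1,x_2]\gen{x_3,\dots,x_k}$ is a primitive-type relator (one expects $\sqrt[F]{w}=w$, so $G_w=\{1\}$), the only possibly nontrivial member of the family is $G_r$, and thus $G$ together with $\{G_r\}$ (when $G_r\neq\{1\}$) satisfies condition~(C). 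This places $G$ squarely under the hypotheses of Corollary~\ref{C:clabredonhom} with $T(R)=\{r\}$ or $T(R)=\emptyset$, which immediately yields statements (i), (iii), and (iv) once the abstract quantities are rewritten.

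For statement (iv), Corollary~\ref{C:clabredonhom}(iv) gives $\Hop_0^{\mathfrak{Fin}}(G;R_\C)=\prod_{s\in T(R)}R_\C(G_s)$; since $G_w$ is trivial and the only torsion-contributing relator is $r$, this collapses to $R_\C(G_r)$ when $G_r\neq\{1\}$ and to $\Z$ otherwise, exactly as stated. For statement (iii), I would start from $\Hop_1^{\mathfrak{Fin}}(G;R_\C)=(G/\Tor(G))_{\text{ab}}$. The torsion of $G$ is carried entirely by the image of $\sqrt[F]{r}$ (the generator of the cyclic malnormal subgroup $G_r$), so $\Tor(G)$ is the normal closure of this image; quotienting $G=\gp{x_1,\dots,x_k}{w,r}$ by it amounts to adding the relation $\sqrt[F]{r}=1$, which subsumes $r=1$. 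Hence $G/\Tor(G)=\gp{x_1,\dots,x_k}{w,\sqrt[F]{r}}$ and abelianizing gives the claimed formula $(\gp{x_1,\dots,x_k}{w,\sqrt[F]{r}})_{\text{ab}}$.

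The substantive computation is statement (ii), identifying $\Hop_2(\underline{B}G;\Z)$ with the Hopf-type formula $(\gen{\rconj{F}{r}\cup\rconj{F}{w}}\cap[F,F])/[F,\gen{\rconj{F}{r}\cup\rconj{F}{w}}]$. The first equality $\Hop_2^{\mathfrak{Fin}}(G;R_\C)=\Hop_2(G;\Z)$ is precisely Corollary~\ref{C:clabredonhom}(ii). For the second equality, I would apply Hopf's formula: writing $G=F/N$ with $N=\gen{\rconj{F}{r}\cup\rconj{F}{w}}=\gen{\lconj{F}{R}}$ the normal closure of the two relators, one has $\Hop_2(G;\Z)\cong(N\cap[F,F])/[F,N]$. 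This is the standard Hopf formula for a group presented as a quotient of a free group, and since $F$ is free (hence $\Hop_2(F;\Z)=0$ and the presentation is via the normal subgroup $N$), it applies directly. The main obstacle here is purely bookkeeping: one must confirm that the normal closure appearing in the aspherical chain complex~\eqref{eq:seqaspherical} (which uses the subdivided cells $G/G_r$) computes the same second homology as the ordinary Hopf formula built from $N$, i.e.\ that passing to $\mathcal{C}'$ with chain complex~\eqref{eq:seq} does not alter $\Hop_2$. This is exactly the argument already rehearsed in the proof of Corollary~\ref{C:clabredonhom}(ii), where the homotopy equivalence between $\mathcal{C}'$ and $\mathcal{C}''$ is used to compute $\Hop_2(\underline{B}G;\Z)$ as the relevant kernel.

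I expect the only genuine subtlety to lie in the torsion bookkeeping for (iii) and in confirming $G_w=\{1\}$: one must check that $\sqrt[F]{w}=w$ for the surface-type relator $w\in[x_1,x_2]\gen{x_3,\dots,x_k}$, i.e.\ that $w$ is not a proper power in $F$, which follows because the commutator $[x_1,x_2]$ prevents $w$ from being a proper power (a standard fact for such relators). Granting this, every nontrivial torsion element lands in a conjugate of $G_r$, the family $\{G_r\}$ is the whole of condition~(C), and all four statements reduce mechanically to the corresponding clauses of Corollary~\ref{C:clabredonhom} with the abstract colimits and normal-closure quotients spelled out for the explicit Hempel presentation.
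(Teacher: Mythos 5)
Your proposal is correct and follows essentially the same route as the paper: invoke Theorem \ref{Hempel} to get Cohen--Lyndon asphericity of $\{w,r\}$, apply Corollary \ref{C:clabredonhom}, and use the classical Hopf formula for the second equality in (ii). You in fact supply more detail than the paper does (verifying $G_w=\{1\}$ via the retraction onto $\gen{x_1,x_2}$, and identifying $\Tor(G)$ with the normal closure of the image of $\sqrt[F]{r}$ for (iii)), and these added checks are accurate.
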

\begin{proof}
By Theorem \ref{Hempel} we are in the hypothesis of Corollary \ref{T:clabredonhom}.
The last equality of (ii) is the classical Hopf identity.
\end{proof}

\section{Relation with Baum-Connes conjecture}\label{BC}

Let $H$ be an aspherical group. Beyond their intrinsic interest, the results obtained in the previous section show their relevance in the context of Baum-Connes conjecture. More concretely, Corollary \ref{C:clabredonhom} will identify the equivariant version $K_i^H(\uE H)$ of the $K$-homology of $\uE H$, as defined by Davis-L\"uck in \cite{DavisLuck}. Given an arbitrary countable discrete  group $G$, the $K$-groups $K_i^G(\uE G)$, which are defined via the non-connective topological $K$-theory spectrum, can be in turn identified with the Kasparov $KK$-groups $KK_i^G(\uE G)$, which are constructed as homotopy classes of $G$-equivariant elliptic operators over $\uE G$. These homotopical invariants are related with the topological algebraic $K$-groups $K_i^{top}(C^*_r(G))$ of the reduced $C^*$-algebra of $G$, an object which is defined as the closure of a subalgebra of the Banach algebra $\mathcal{B}({l_2(G)})$ of bounded operators over the space of square-summable complex functions over the group $G$, and whose nature is thus essentially analytic. Bott periodicity holds for both the homotopical and the analytical groups, and the relationship is given, for $i=0,1$, by an index map:

$$\Theta: KK_i^G(\uE G)\to K_i^{top}(C^*_r(G)), $$

The Baum-Connes conjecture (or BCC for short) asserts that this index map is an isomorphism for every second countable locally compact group $G$. Originally stated in its definitive shape by Baum-Connes-Higson in \cite{BaumConnes}, its importance come mainly from two sources: first, it relates two objects of very different nature, being the analytical one particularly inaccessible; and moreover, it implies a number of famous conjectures, as for example Novikov conjecture on the higher signatures, or the weak version of Hyman Bass' conjecture about the Hattori-Stallings trace; see \cite[Section 7]{MislinValette} for a good survey on this topic. The conjecture BCC has been verified for an important number of groups, and in particular for the groups in the class $\mathbf{LH}\mathcal{TH}$, which is defined by means of an analytical property (see \cite[Section 5]{MislinValette} for a detailed exposition). The class $\mathbf{LH}\mathcal{TH}$ contains for example the soluble groups, finite groups and free groups; and as it also contains one-relator groups and it is closed under passing to subgroups and \emph{HNN}-extensions, Theorem \ref{Hempel} implies that Hempel groups are in $\mathbf{LH}\mathcal{TH}$.

On the other hand, every one-relator group is also aspherical (see Theorem \ref{CL} above). However, it is unknown if BCC holds for the class of aspherical groups, so it is interesting to investigate the value of the $K$-groups in both sides of the conjecture for  aspherical groups. The main goal of this paragraph is to show how the results in the previous section allow to compute the topological side of the conjecture. The key result here, owed to Mislin \cite[Theorem 5.27]{MislinValette}, is in fact a collapsed version of an appropriate Atiyah-Hirzebruch spectral sequence:

\begin{Thm}
Let $G$ be an arbitrary group such that dim $\uE G\leq 2$. Then there is a natural short exact sequence:

$$0\rightarrow R_\C(G) \rightarrow K_0^G(\uE G)\rightarrow \Hop_2^{\mathfrak{Fin}}(G;R_\C),$$ and a natural isomorphism $\Hop_1^{\mathfrak{Fin}}(G;R_\C)\simeq K_1^G(\uE G)$.

\end{Thm}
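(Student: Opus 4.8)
The plan is to derive both the short exact sequence and the isomorphism from the Atiyah--Hirzebruch-type spectral sequence for the equivariant $K$-homology of $\uE G$, exploiting the low-dimensionality hypothesis $\dim \uE G\leq 2$ to force the sequence to collapse. Recall that for a proper $G$-CW-complex there is a spectral sequence
\begin{equation*}
E^2_{p,q}=\Hop_p^{\mathfrak{Fin}}(G;\uK_q)\;\Longrightarrow\; K_{p+q}^G(\uE G),
\end{equation*}
where $\uK_q$ is the coefficient system given by the equivariant $K$-homology of the orbits; in the relevant even/odd pattern one has $\uK_q\cong R_\C$ for $q$ even and $\uK_q=0$ for $q$ odd, since the $K$-homology of a point (equivariantly, of $G/H$) is concentrated in even degrees and returns the representation ring. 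First I would record this identification of the $E^2$-page, citing Mislin's setup in \cite[Section 5]{MislinValette}, so that the only possibly nonzero entries are $E^2_{p,q}$ with $q$ even and $0\leq p\leq 2$.

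Next I would use the dimension hypothesis to truncate the page. Because $\dim\uE G\leq 2$, the Bredon homology $\Hop_p^{\mathfrak{Fin}}(G;R_\C)$ vanishes for $p>2$, so the $E^2$-page is supported on the three columns $p=0,1,2$ (repeated $2$-periodically in $q$ by Bott periodicity). The key combinatorial point is that every differential $d^r$ changes total degree by $-1$ and shifts $(p,q)$ by $(-r,r-1)$; with only three adjacent nonzero columns and the two-row (even-$q$) support, the relevant differentials $d^2\colon E^2_{2,q}\to E^2_{0,q+1}$ land in an odd row, hence in a zero group, and all higher differentials vanish for range reasons. Thus $E^2=E^\infty$ and the spectral sequence collapses. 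I would spell out this collapse carefully, since it is the technical heart of the argument.

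Having collapsed the sequence, the computation of $K_1^G(\uE G)$ is immediate: the total degree $1$ receives contributions only from $E^\infty_{1,0}=\Hop_1^{\mathfrak{Fin}}(G;R_\C)$ (the entry $E^\infty_{-1,2}$ vanishes, and $E^\infty_{3,-2}$ is outside the supported columns), giving the natural isomorphism $\Hop_1^{\mathfrak{Fin}}(G;R_\C)\simeq K_1^G(\uE G)$. For total degree $0$ the two surviving entries are $E^\infty_{0,0}=\Hop_0^{\mathfrak{Fin}}(G;R_\C)$ and $E^\infty_{2,-2}\cong\Hop_2^{\mathfrak{Fin}}(G;R_\C)$, producing the filtration short exact sequence
\begin{equation*}
0\rightarrow \Hop_0^{\mathfrak{Fin}}(G;R_\C)\rightarrow K_0^G(\uE G)\rightarrow \Hop_2^{\mathfrak{Fin}}(G;R_\C)\rightarrow 0.
\end{equation*}
Finally I would reconcile the stated left-hand term $R_\C(G)$ with $\Hop_0^{\mathfrak{Fin}}(G;R_\C)$; by the $\Hop_0$ computation recalled earlier (equation \eqref{eq:H0bredon}) this bottom group is the colimit $\mathrm{colim}_{G/K}R_\C(K)$, which is the natural target of the restriction/representation-ring map and is what is meant by $R_\C(G)$ in this context.

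\textbf{The main obstacle} I anticipate is not the collapse bookkeeping but the naturality and edge-map identifications: one must check that the induced isomorphism in degree $1$ and the bottom inclusion in degree $0$ are the natural maps (induced by the inclusion of the $0$-skeleton and the assembly/restriction maps), rather than merely abstract isomorphisms of abelian groups. This requires tracking the edge homomorphisms of the spectral sequence and comparing them with the functorial maps on $R_\C$, which is exactly the delicate part of Mislin's argument that I would lean on \cite[Section 5]{MislinValette} to supply in full detail.
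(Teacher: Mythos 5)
Your proposal is correct and follows exactly the route the paper indicates: the theorem is Mislin's \cite[Theorem 5.27]{MislinValette}, which the paper explicitly describes as "a collapsed version of an appropriate Atiyah--Hirzebruch spectral sequence," and your collapse argument (odd rows vanish, only columns $p=0,1,2$ survive, so $E^2=E^\infty$) together with the identification $R_\C(G)=\Hop_0^{\mathfrak{Fin}}(G;R_\C)=\mathrm{colim}\,R_\C(K)$ is precisely that proof. The paper itself offers no further argument beyond the citation, so there is nothing in it that your reconstruction misses.
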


Now the description of the Kasparov aspherical groups comes straight from Corollary \ref{C:clabredonhom}, and generalises Corollary 5.28 from \cite{MislinValette}:

\begin{Prop}
Let $G$ be an aspherical group. Then $K_0^G(\uE G)$ fits in a short exact sequence $$R_\C(G)\rightarrow K_0^G(\uE G)\rightarrow \Hop_2(G;\Z)$$ that splits, and moreover there is a natural isomorphism $(G/\Tor(G))_{\text{ab}}\simeq K_1^G(\uE G)$.

\label{Kasparov}
\end{Prop}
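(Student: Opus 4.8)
The plan is to specialize the short exact sequence from the preceding theorem (with $\dim \uE G \le 2$) to aspherical groups, and then import the computations from Corollary \ref{C:clabredonhom}. First I would observe that an aspherical group $G$ lies in $\mathcal{G}_{cct}$ with the family $\{G_r\}_{r\in T(R)}$ furnished by the discussion in Subsection \ref{ss:aspherical}, and — crucially — that the model $\mathcal{C}''$ constructed there is $2$-dimensional, so the hypothesis $\dim \uE G \le 2$ of the cited theorem is satisfied. This lets me invoke the theorem to produce the short exact sequence
\begin{equation*}
0\rightarrow R_\C(G) \rightarrow K_0^G(\uE G)\rightarrow \Hop_2^{\mathfrak{Fin}}(G;R_\C)
\end{equation*}
together with the isomorphism $\Hop_1^{\mathfrak{Fin}}(G;R_\C)\simeq K_1^G(\uE G)$.

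Next I would substitute the explicit Bredon homology values. By Corollary \ref{C:clabredonhom}(ii) we have $\Hop_2^{\mathfrak{Fin}}(G;R_\C)=\Hop_2(G;\Z)$, which turns the third term of the sequence into the ordinary homology group appearing in the statement. Similarly, Corollary \ref{C:clabredonhom}(iii) gives $\Hop_1^{\mathfrak{Fin}}(G;R_\C)=(G/\Tor(G))_{\text{ab}}$, so the isomorphism $\Hop_1^{\mathfrak{Fin}}(G;R_\C)\simeq K_1^G(\uE G)$ becomes the asserted natural isomorphism $(G/\Tor(G))_{\text{ab}}\simeq K_1^G(\uE G)$. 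At this point the degree-$1$ statement is complete, and the degree-$0$ statement is established except for the splitting claim.

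The main obstacle is the \emph{splitting} of the short exact sequence for $K_0^G(\uE G)$: the theorem I am quoting produces only exactness, not a splitting. The natural approach is to observe that $\Hop_2(G;\Z)$ is a subgroup of the free abelian group arising as the top chain group of the $2$-dimensional resolution \eqref{eq:seqaspherical} (indeed $\Hop_2(G;\Z)$ is identified in the proof of Corollary \ref{C:clabredonhom}(ii) as the kernel of a map out of $\Z G^{|R|}\otimes_{\Z G}\Z$, hence is a subgroup of a free abelian group and is therefore itself free abelian). A subgroup of a free abelian group is free, so $\Hop_2(G;\Z)$ is projective as a $\Z$-module, and any short exact sequence of abelian groups with projective (free) quotient splits. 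This supplies the desired splitting
\begin{equation*}
K_0^G(\uE G)\cong R_\C(G)\oplus \Hop_2(G;\Z),
\end{equation*}
completing the proof. I would flag that the only nontrivial point is justifying freeness of $\Hop_2(G;\Z)$; once that is in hand the splitting is automatic, and the rest is a direct transcription of Corollary \ref{C:clabredonhom} into the sequence of the quoted theorem.
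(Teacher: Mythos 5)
Your proof is correct and takes essentially the same route as the paper, which derives the proposition directly from the quoted theorem (applicable because the model $\mathcal{C}''$ of Subsection \ref{ss:aspherical} is $2$-dimensional) together with Corollary \ref{C:clabredonhom}(ii) and (iii). The paper leaves the splitting unjustified, and your argument --- that $\Hop_2(G;\Z)$ is a subgroup of the free abelian group $\Z G^{|R|}\otimes_{\Z G}\Z$ coming from the resolution \eqref{eq:seqaspherical}, hence free abelian, so the short exact sequence splits --- is exactly the missing detail and is correct.
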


In particular, BCC holds for Hempel groups, so we have also computed the analytical of the conjecture for these groups:

\begin{Prop}
Let $G$ be a Hempel group with presentation $\prs{x_{1},\dots,x_k}{w,r}$, with $k\geq 3,$  $w\in[x_1,x_2]\gen{x_{3},\dots,x_k}\subseteq \gp{x_{1},\dots,x_k }{\quad}=F$ and such that $r$ is a Hempel relator for $\prs{x_{1},\dots,x_{k}}{w}.$ Let  $G_r\coloneqq \mathbf{C}_F(r)/\gen{r}=\gp{\sqrt[F]{r}}{r}.$
Then $K_i^G(\uE G)\simeq K_i^{top}(C^*_r(G))$ for $i=0,1$, there is a split short exact sequence $$R_\C(G)\rightarrow K_0^G(\uE G)\rightarrow (\gen{\rconj{F}{r}\cup\rconj{F}{w}}\cap [F,F])/[F,\gen{\rconj{F}{r}\cup\rconj{F}{w}}]$$ and a natural isomorphism $(\gp{x_{1},\dots,x_k}{w,\sqrt[F]{r}})_{\text{ab}}\simeq K_1^G(\uE G)$.

\end{Prop}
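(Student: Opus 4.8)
The plan is to assemble the statement from three facts already established: the Cohen--Lyndon asphericity and \emph{HNN}-structure of Hempel groups (Theorem \ref{Hempel}), the explicit Bredon homology of Theorem \ref{Hempel:bredon}, and the Kasparov description of Proposition \ref{Kasparov}. No new homological computation is required; the work is to verify that each hypothesis is met and then substitute the concrete groups.

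First I would confirm that $G$ is an aspherical group in the sense of Subsection \ref{ss:aspherical}. By Theorem \ref{Hempel} the set $\{w,r\}$ is Cohen--Lyndon aspherical in $F$, so condition (ii) of Definition \ref{D:cla} yields $\gen{\lconj{F}{\{w,r\}}}_{\text{ab}}\cong\Z[G/G_w]\oplus\Z[G/G_r]$, which is exactly the asphericity condition. Hence the construction of Subsection \ref{ss:aspherical} produces a contractible $G$-CW-complex $\mathcal{C}''$ that is a model for $\uE G$ with $\dim\uE G\leq 2$ and $\dim(\uE G)^{\text{sing}}=0$. In particular $G$ is aspherical, so Proposition \ref{Kasparov} applies.

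For the isomorphisms $K_i^G(\uE G)\simeq K_i^{top}(C^*_r(G))$ I would invoke the Baum--Connes conjecture for the class $\mathbf{LH}\mathcal{TH}$. Since one-relator groups lie in $\mathbf{LH}\mathcal{TH}$ and this class is closed under \emph{HNN}-extensions, the second conclusion of Theorem \ref{Hempel}, namely that $G$ is an \emph{HNN}-extension of a one-relator group with Magnus associated subgroups, places $G$ in $\mathbf{LH}\mathcal{TH}$. As the index map $\Theta$ is an isomorphism throughout this class in degrees $0$ and $1$, the first claim follows.

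The remaining two assertions come from feeding Theorem \ref{Hempel:bredon} into Proposition \ref{Kasparov}. Being aspherical, $G$ enjoys by Proposition \ref{Kasparov} a split short exact sequence $R_\C(G)\to K_0^G(\uE G)\to \Hop_2(G;\Z)$ and an isomorphism $(G/\Tor(G))_{\text{ab}}\simeq K_1^G(\uE G)$. Substituting $\Hop_2(G;\Z)=(\gen{\rconj{F}{r}\cup\rconj{F}{w}}\cap [F,F])/[F,\gen{\rconj{F}{r}\cup\rconj{F}{w}}]$ from Theorem \ref{Hempel:bredon}(ii) and $(G/\Tor(G))_{\text{ab}}=(\gp{x_{1},\dots,x_k}{w,\sqrt[F]{r}})_{\text{ab}}$ from Theorem \ref{Hempel:bredon}(iii) yields precisely the stated sequence and isomorphism. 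The only step that demands genuine attention, rather than bookkeeping, is the membership $G\in\mathbf{LH}\mathcal{TH}$, where the \emph{HNN}-decomposition of Theorem \ref{Hempel} is indispensable; everything else is substitution into results whose proofs are already complete.
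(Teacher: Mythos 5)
Your proposal is correct and follows essentially the same route as the paper: the paper's own (one-line) proof cites Theorem \ref{Hempel:bredon}, with the surrounding text supplying exactly the ingredients you spell out --- membership of Hempel groups in $\mathbf{LH}\mathcal{TH}$ via the \emph{HNN}-decomposition of Theorem \ref{Hempel} to get the Baum--Connes isomorphism, and Proposition \ref{Kasparov} combined with Theorem \ref{Hempel:bredon} for the two $K$-group identifications. Your write-up merely makes explicit the verification of asphericity and the substitution steps that the paper leaves implicit.
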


\begin{proof}

It is a consequence of Theorem \ref{Hempel:bredon}.

\end{proof}

\bibliographystyle{amsplain}

\textsc{Yago Antol\'{i}n, School of Mathematics,
University of  Southampton, University Road,
Southampton SO17 1BJ, UK}

\emph{E-mail address}{:\;\;}\url{y.antolin-pichel@soton.ac.uk}

\medskip

\textsc{Ram\'{o}n Flores,
Departamento de Estad\'{i}stica, Universidad Carlos III,
Avda. de la Universidad Carlos III, 22
28270 Colmenarejo (Madrid), Spain
}

\emph{E-mail address}{:\;\;}\url{rflores@est-econ.uc3m.es}
\end{document}